\title{Crossing numbers of periodic graphs}
\author{Zden\v{e}k Dvo\v{r}\'ak\thanks{Computer Science Institute of Charles University, Prague, Czech Republic.
E-mail: {\tt rakdver@iuuk.mff.cuni.cz}.
Supported by the Center of Excellence -- Inst. for Theor. Comp. Sci., Prague (project P202/12/G061 of Czech Science Foundation).}
\and
Bojan Mohar\thanks{Department of Mathematics, Simon Fraser University, Burnaby, B.C. V5A 1S6.
 E-mail: {\tt mohar@sfu.ca}.
 Supported in part by an NSERC Discovery Grant (Canada),
 by the Canada Research Chair program, and by the
 Research Grant P1--0297 of ARRS (Slovenia).}~\thanks{On leave from:
 IMFM \& FMF, Department of Mathematics, University of Ljubljana, Ljubljana,
 Slovenia.}}
\newtheorem{theorem}{Theorem}
\newtheorem{lemma}[theorem]{Lemma}
\newtheorem{corollary}[theorem]{Corollary}
\newtheorem{proposition}[theorem]{Proposition}
\newtheorem{conjecture}{Conjecture}
\DeclareMathOperator{\crn}{cr}
\newcommand\cyc{\odot}
\newcommand\GG{{\cal G}}
\begin{document}

\maketitle

\begin{abstract}
A graph is \emph{periodic} if it can be obtained by joining identical pieces in a cyclic fashion.
It is shown that the limit crossing number of a periodic graph is computable. This answers a question of Richter~\cite[Problem~4.2]{BIRSreport}.
\end{abstract}

\section{Introduction}

The asymptotic behavior of the crossing number of periodic graphs, i.e., the
graphs that can be obtained by joining identical pieces in a cyclic fashion,
plays fundamental role in constructions of crossing-critical graphs~\cite{PR2}
and in explaining certain phenomena. The first systematic treatment of this
area is due to Pinontoan and Richter~\cite{PR1}, who provided basic results and
motivated several questions. In this paper we provide a simplified, yet
equivalent setting for considering periodic graphs and answer
a question of Pinontoan and Richter.

We allow graphs to have loops and parallel edges.
A \emph{tile} is a triple $T=(G,A,B)$, where $G$ is a graph and $A=(a_1,a_2,\ldots, a_k)$ and $B=(b_1,b_2,\ldots, b_k)$
are sequences of vertices of $G$ of the same length $k$.  Note that each vertex of $G$ may appear several times
in $A$ and $B$.  The length $k$ of the sequences is called the \emph{width} of the tile, and the graph $G$ is
the \emph{underlying graph} of the tile.
If $T_1=(G_1,A_1,B_1)$ and $T_2=(G_2,A_2,B_2)$ are tiles of the same width, we denote by $T_1T_2$
the tile $(G,A_1,B_2)$, where $G$ is the graph obtained from the disjoint union of $G_1$ and $G_2$ by
adding, for $1\le i\le k$, an edge between the $i$-th vertices of $B_1$ and $A_2$.
By $\cyc(T_1)$, we denote the graph obtained from $G_1$ by adding edges between the $i$-th vertices of $A_1$ and $B_1$ for $i=1,\dots,k$.
If $T$ is a tile, we define $T^1=T$ and $T^n=T^{n-1}T$ for integers $n>1$.
We call the edges of $\cyc(T_1^n)$ that belong to the copies of $G_1$ \emph{internal}, and the edges between the copies \emph{external}.

For a tile $T$, we would like to determine the asymptotic behavior
of the crossing number of $\cyc(T^n)$.  Let us remark that this can significantly differ from
the crossing number of $T^n$ (compare e.g. the cylindrical $m\times n$ grid with the toroidal $m\times n$ grid).
Let $c_n(T)=\crn(\cyc(T^n))$.  Pinontoan and Richter proved in~\cite{PR1} that
the limit $$c(T)=\lim_{n\to\infty} \frac{c_n(T)}{n}$$ exists (they prove this only for tiles whose underlying graph is connected,
however the claim holds for disconnected tiles as well, see Lemma~\ref{lemma-limit}).
However, their proof gives no way to determine or to estimate the limit.\footnote{The abstract of \cite{PR1} erroneously states an estimate on the convergence rate of the limit, but the results of the paper do not give this, as confirmed in a private communication with the authors.}
Our main result is a bound on the convergence rate of the limit, giving an algorithm to approximate $c(T)$ within arbitrary precision.

\begin{theorem}
\label{thm:main}
For every tile $T$ and for every $\varepsilon>0$, there exists a computable constant
$N=O(1/\varepsilon^6)$ such that
$$
   \left|\frac{c_t(T)}{t} - c(T)\right| \le \varepsilon
$$
for every integer $t\ge N$.
\end{theorem}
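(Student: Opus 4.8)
The plan is to prove the two-sided estimate $c(T)\,t-O(t^{5/6})\le c_t(T)\le c(T)\,t+O(t^{5/6})$ with the implied constants computable from $T$; dividing by $t$ then gives $|c_t(T)/t-c(T)|\le C\,t^{-1/6}$ for a computable $C=C(T)$, which is at most $\varepsilon$ once $t\ge N:=\lceil(C/\varepsilon)^6\rceil=O(1/\varepsilon^6)$, and $N$ is then computable as well. The lower bound on $c_t(T)$ will come essentially for free (with error $O(1)$ rather than $O(t^{5/6})$), so the upper bound on $c_t(T)$ is the real content.

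For the easy direction I would first establish an approximate subadditivity of $c_n(T)$. Given optimal drawings of $\cyc(T^a)$ and $\cyc(T^b)$, in each one delete the bundle of $k$ external edges incident to the fewest crossings --- at most $2c_a(T)/a$, resp.\ $2c_b(T)/b$, of them, by averaging over the $a$, resp.\ $b$, bundles --- to obtain drawings of the linear tiles $T^a$, $T^b$ in a disk with the two interface ends on the boundary; gluing the two disks and reconnecting both pairs of interfaces by routing $O(k)$ edges through annular collars costs $O(k^2+k(c_a(T)/a+c_b(T)/b))=O(1)$ extra crossings, so $c_{a+b}(T)\le c_a(T)+c_b(T)+D$ for a constant $D=D(T)$. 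Then $\tilde c_n(T):=c_n(T)+D$ is subadditive, whence $c(T)=\inf_n \tilde c_n(T)/n$ and $c_n(T)\ge c(T)\,n-D$. (The same construction with $\lceil n/m\rceil$ cut-open copies of $\cyc(T^m)$ gives $c_n(T)/n\le c_m(T)/m+O(1/m+m/n)$ and hence $c(T)\le c_m(T)/m+O(1/m)$ on letting $n\to\infty$; but this does not yield the upper bound on $c_t(T)$, since to use it one needs a \emph{specific} $m$ for which $c_m(T)/m$ is already close to $c(T)$, and the effective rate of that convergence is precisely what we are after.)

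For the hard direction I would argue that near-optimal drawings of $\cyc(T^n)$ are, up to $O(n^{5/6})$ crossings, controlled by a finite computable gadget. The steps: (1) in an optimal drawing $D$ of $\cyc(T^n)$, a charging argument over its $O(n)$ crossings shows that all but $o(n)$ of the $n$ cyclic cut positions admit a noose meeting only the $2k$ forced edges, and that such nooses can be taken pairwise laminar; cutting along a subfamily of them splits $D$ into pieces, each a drawing of some $T^{j}$ in a disk with its interfaces on the boundary; (2) capping a piece of length $j$ off with $O(k^2)$ reconnection crossings turns it into a drawing of $\cyc(T^j)$, so its interior carries at least $c_j(T)-O(k^2)\ge c(T)\,j-O(1)$ crossings; and (3) --- the crucial point --- the interface type at which $D$ meets each chosen cut ranges, after bounding the width, over an explicitly finite set, so that the minimum crossing cost per tile over all sufficiently structured periodic drawings is a computable quantity (a minimum mean cycle weight in an explicit weighted digraph whose vertices are interface types), and this quantity equals $c(T)$. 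Choosing the piece length, the number of pieces, and the width threshold so as to balance the sublinear error terms produces the error $O(n^{5/6})$, and hence $N=O(1/\varepsilon^6)$.

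I expect step~(3) to be the main obstacle: near-optimal drawings of $\cyc(T^n)$ need not have bounded width, so before the problem reduces to a finite automaton one must pay to ``tame'' the sublinearly many congested cut positions, and it is the arithmetic of this trade-off --- how many pieces, of what length, at what width threshold --- that is delicate and that ultimately fixes the polynomial convergence rate. Steps~(1) and~(2), by contrast, are routine uncrossing, rerouting, and counting arguments, and the passage from the two-sided estimate to the statement of the theorem is immediate.
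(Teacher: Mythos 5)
Your easy direction is essentially sound and matches the paper's route: approximate subadditivity via cutting a cyclic drawing at a cheap interface and regluing is exactly the content of the paper's tile-drawing machinery (Lemma~\ref{lemma-nearc}, $c_n(T)\le t_n(T)$, Fekete), although note that deleting the cheapest bundle of external edges does not by itself yield a drawing of $T^a$ in a disk with the interface vertices on the boundary; one must reroute the dangling interface edges through a cheap window of tiles (using connectivity of $T$ or of $T^s$), which is what Lemma~\ref{lemma-nearc} does and what your cost accounting silently assumes. The genuine gap is in the hard direction. Your step~(1) claims that in an optimal drawing of $\cyc(T^n)$ a charging argument over the $O(n)$ crossings produces, at all but $o(n)$ cut positions, a noose meeting only the forced interface edges, and that these nooses can be taken laminar. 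Charging crossings to positions only shows that most positions are locally cheap; it does not exclude long-range crossings, i.e.\ a crossing between an edge of tile $i$ and an edge of a far-away tile $j$ is charged to $i$ and $j$ but obstructs a clean noose at every position between them. Ruling out such long-range interaction is precisely the hard content of the paper (Lemma~\ref{lemma-nodist}), and the paper cannot prove it for optimal drawings at all: it proves locality only for a drawing chosen to minimize a specially weighted crossing number $\crn_\beta$ (weights $1+2\beta$, $1+\beta$, $1$ enabling the tile-deletion Lemma~\ref{lemma-elimbad}, which also needs linkedness), for the \emph{smallest} $n$ in a prescribed range admitting a drawing with $\crn_\beta\le\alpha n+Q$, and under the extra hypothesis that no shorter cyclic power has a cheap drawing. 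Your proposal treats this as routine counting, which it is not, and as a statement about optimal drawings, which is not established anywhere.

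Step~(3) is a second, independent gap: asserting that after bounding the interface width the per-tile cost equals a minimum mean cycle in an explicit finite digraph of interface types would make $c(T)$ rational and exactly computable, which the paper explicitly poses as an open problem (Conjecture~\ref{conj-rat}); no argument is offered for why near-optimal drawings can be tamed to boundedly many interface types, and you yourself flag this as the main obstacle. The paper's actual proof of Theorem~\ref{thm:main} avoids any such automaton: it shows (Lemma~\ref{lemma-lbound}) by a minimal-counterexample splitting argument that some $n$ of size polynomial in $1/\varepsilon$ already satisfies $c_n(T)/n\le c(T)+\varepsilon$, and then propagates this to all $t\ge N$ via Lemma~\ref{lemma-upper}, with Corollary~\ref{cor-upper} giving the other inequality. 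As it stands, your plan replaces the two hardest steps by unproven structural claims (one of which is stronger than what is known), so it does not constitute a proof.
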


Therefore, to estimate the limit within given precision $\varepsilon$, it suffices to determine the constant
$N$ of Theorem~\ref{thm:main} and compute the crossing number of $\cyc(T^N)$, which can be done using the
algorithm of Chimani et al.~\cite{crossalg}.  The resulting algorithm is exponential in a polynomial of $\frac{1}{\varepsilon}$.

It seems intuitively obvious that for large $n$, there is an optimal drawing of $\cyc(T^n)$ exhibiting some kind of
periodic behavior, and thus $c(T)$ is a rational number.  Nevertheless, we were not able to prove this, and we propose the following
conjecture.
\begin{conjecture}\label{conj-rat}
There exists a computable function $f$ that assigns to every tile $T$ a positive integer $f(T)$ such that
the number $c(T)$ is rational with denominator at most $f(T)$.
\end{conjecture}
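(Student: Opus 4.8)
The plan is to realize $c(T)$ as the \emph{minimum mean cycle weight} of a finite weighted digraph canonically attached to $T$; since such a quantity is always rational with denominator at most the number of vertices of the digraph, Conjecture~\ref{conj-rat} follows with $f(T)$ equal to a computable bound on that number.

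First I would set up a finite set of \emph{interface profiles}. Fix a drawing $D$ of $\cyc(T^n)$ and, for each $i$, let $S_i$ be the seam of $D$ ``just after'' the $i$-th copy of the underlying graph $G$, that is, a closed curve meeting the drawing transversally and separating the first $i$ copies from the rest. The profile at $S_i$ records the cyclic order along $S_i$ of the points where edges of the drawing cross $S_i$, which edge of the adjacent copy of $G$ owns each such point, and the planar nesting pattern of the arcs of $D$ cut off near $S_i$, all taken up to homeomorphism of an annular neighbourhood of $S_i$. The crucial point is a \emph{rerouting lemma}: there is a computable $B(T)$ such that $\cyc(T^n)$ has an optimal drawing in which every seam $S_i$ is crossed at most $B(T)$ times. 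Granting this, the number of profiles is bounded by a computable function $F(T)$. One then forms a digraph $\mathcal A$ whose vertices are the profiles, with an arc $\tau\to\tau'$ whenever inserting one copy of $G$ together with its external edges can carry profile $\tau$ to profile $\tau'$, and with the weight of this arc equal to the least number of crossings that must occur strictly between two consecutive seams under some such realization, using a fixed convention for crossings lying on a seam (say, bill them to the tile on the left).

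Next I would prove that $c_n(T)$ equals the minimum weight of a closed walk of length $n$ in $\mathcal A$, up to an additive error bounded by a computable constant $C(T)$ depending only on the width of $T$. For the upper bound, any closed walk of length $n$ can be glued, strip by strip, into a cylindrical drawing of $\cyc(T^n)$, and the extra crossings needed to close the cylinder at the base seam number at most $C(T)$. For the lower bound, take an optimal drawing of $\cyc(T^n)$, apply the rerouting lemma so that all seams are crossed at most $B(T)$ times, straighten each strip between consecutive seams to read off a transition of $\mathcal A$, and observe that the resulting closed walk has weight at most $c_n(T)$ plus a bounded double-counting term near the base point, again at most $C(T)$.

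Finally, dividing by $n$ and letting $n\to\infty$, the $C(T)$ errors wash out, so by Lemma~\ref{lemma-limit} (together with Theorem~\ref{thm:main}) the value $c(T)$ equals $\lim_{n\to\infty}\frac1n\,(\text{min weight of a closed walk of length }n\text{ in }\mathcal A)$, which is exactly the minimum mean weight of a cycle of $\mathcal A$. As $\mathcal A$ has nonnegative integer weights and at most $F(T)$ vertices, this minimum is attained on a simple cycle, so $c(T)$ is rational with denominator at most $F(T)$, and $f(T):=F(T)$ works. The main obstacle is the rerouting lemma with a bound $B(T)$ \emph{independent of $n$}: an edge of $\cyc(T^n)$ may wind many times around the cylinder, and although each extra winding can be removed to reduce crossings in isolation, carrying out such unwindings for all edges simultaneously, without creating new crossings and without the simplification at one seam destroying the bound at another, requires a global argument we have not been able to push through. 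This is precisely the point at which the proof of Theorem~\ref{thm:main} exploits its $\varepsilon$ of slack, and it is why the statement here is only conjectural.
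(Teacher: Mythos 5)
The statement you were given is Conjecture~\ref{conj-rat}; the authors explicitly state they ``were not able to prove'' it, so there is no proof in the paper to compare yours against. Your proposal is not a proof either, and you say so yourself. You have correctly identified the natural transfer-matrix strategy --- encode boundary behaviour at ``seams'' by a finite set of interface profiles, reduce $c(T)$ to the minimum mean cycle weight of a finite digraph, and read off rationality with denominator at most the number of states --- and you have correctly identified the single obstruction: a \emph{rerouting lemma} asserting that every $\cyc(T^n)$ admits an optimal (or near-optimal) drawing in which every seam curve is crossed at most $B(T)$ times, with $B(T)$ independent of $n$.

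To make the gap precise: in an arbitrary optimal drawing in the sphere there is no a priori reason that a closed curve should exist that both separates the first $i$ tiles from the rest and meets the drawing in a uniformly bounded number of points. Edges need not respect the cyclic structure at all; a single tile's drawing may spread over the entire sphere, and even if you impose an annular structure, an edge may wind around the annulus arbitrarily many times without that being locally improvable. The paper's own techniques prove something genuinely weaker. Lemma~\ref{lemma-nodist} shows only that, in a drawing whose weighted crossing count $\crn_\beta$ is small, any two \emph{crossing} edges have bounded cyclic tile distance; this controls where crossings occur, not how often a separating curve is pierced, and so it yields an additive error bound (Lemma~\ref{lemma-lbound} and Theorem~\ref{thm:main}), not a finite state space. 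This is precisely the point you flag when you say the proof of Theorem~\ref{thm:main} ``exploits its $\varepsilon$ of slack.'' So your proposal is a reasonable and honestly framed reduction of the conjecture to the uniform-seam-bound lemma, but it leaves the conjecture open, just as the paper does.
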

If this were true, our result would give an algorithm to determine $c(T)$ exactly, by choosing $\varepsilon=\tfrac{1}{2f^2(T)}$
and rounding the result to the nearest rational number with denominator at most $f(T)$.
Note that $c(T)$ is not always integral, as shown by Pinontoan~\cite{Pinon06}.

Let us remark that we define the cyclic composition $\cyc(T^n)$ of copies of a tile
$T=(G,A,B)$ in a way slightly different from the definition of Pinontoan and Richter~\cite{PR1}.
They assume that the $2k$ vertices appearing in $A$ and $B$ are all distinct and that for $i,j=1,\dots,k$, $a_i$
and $a_j$ are adjacent if and only if $b_i$ and $b_j$ are adjacent. Then,
instead of adding the edges $a_ib_i$ joining consecutive copies of $T$, they
identify $a_i$ and $b_i$ for $i=1,\dots,k$.  Our choice of a different definition turns out to be very convenient in our proofs,
as we can treat the external edges added between the tiles specially.

It is easy to see that the same set of cyclic graphs can be constructed by both definitions.
Given a tile $T$ in our definition, we can add vertices $a_i'$ and $b_i'$ and edges $a_ia_i'$ and $b_ib_i'$
for $i=1,\dots k$, and after this change the construction of \cite{PR1} gives the same result as our
definition up to vertices of degree 2, which have no effect on the crossing number.
On the other hand, given the cyclic composition $H$ of $n$ copies of $T$ constructed
as in \cite{PR1}, we can choose an arbitrary edge-cut in $T$ separating $A$ from $B$
and split $H$ on the copies of this edge-cut, resulting in $n$ identical tiles which
can be composed to form $H$ using our definition of tile composition.

\section{Connectivity and linkedness}

Before starting with the main proofs, we show that it suffices to consider tiles that are connected and linked.
We say that a tile is \emph{connected} if its underlying graph is connected.
A tile $(G,A,B)$ of width $k$ is \emph{weakly linked}
if there exist pairwise edge-disjoint paths $P_1$, \ldots, $P_k$ in $G$ and a permutation $\pi$ of $[k]$ such
that for $1\le i\le k$, the path $P_i$ starts at the $i$-th element of $A$ and ends at the $\pi(i)$-th element of $B$.
A tile $(G,A,B)$ of width $k$ is \emph{linked}
if there exist pairwise edge-disjoint paths $P_1$, \ldots, $P_k$ in $G$ such
that for $1\le i\le k$, the endvertices of $P_i$ are the $i$-th elements of $A$ and $B$.

\begin{proposition}
\label{prop:1}
For every tile $T=(G,A,B)$ of width $k$ there exists a weakly linked tile $T_0$
of width $t\le k$ such that for every $n\ge 1$, we have
$\cyc(T_0^n)=\cyc(T^n)$.
\end{proposition}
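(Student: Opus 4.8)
The plan is to repeatedly simplify the tile by removing "useless" parts of the interface until we reach a weakly linked tile, while preserving the cyclic composition $\cyc(T^n)$ for all $n$. The key observation is that the external edges added in the composition only touch the vertices listed in $A$ and $B$; so if some external edge is forced to be a bridge in every $\cyc(T^n)$ or can be rerouted, we can eliminate it. More precisely, consider a maximum family of pairwise edge-disjoint paths in $G$ linking (some of) the vertices of $A$ to (some of) the vertices of $B$, say in a matching fashion; I would first argue, via a Menger/flow argument on $G$ with $A$ as sources and $B$ as sinks, that if no such family of size $k$ with a full matching between $A$ and $B$ exists, then there is an edge cut in $G$ of size $t<k$ separating $A$ from $B$. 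The structure of this cut is what drives the reduction.

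The main steps are as follows. First I would set up the source-sink network: add a super-source $s$ joined to the vertices of $A$ (with multiplicities) and a super-sink $s'$ joined to the vertices of $B$, and compute a maximum integral flow; if its value is $k$ we are already weakly linked (the flow decomposes into $k$ edge-disjoint $A$–$B$ paths realizing some permutation $\pi$), and we take $T_0=T$. Otherwise, by the max-flow/min-cut theorem there is an edge cut $F\subseteq E(G)$ with $|F|=t<k$ separating the copy of $A$ from the copy of $B$ in the network, hence an edge cut of $G$ (of size $\le t$) separating $A$ from $B$. Second, I would use this cut to "fold" the tile: the cut $F$ induces a new interface of width $\le t$, and I claim that $\cyc(T^n)$ is isomorphic to $\cyc(T_1^n)$ where $T_1$ is obtained by cutting $G$ along $F$ and taking the two new sequences of cut-endpoints as the new $A_1,B_1$. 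Intuitively, in $\cyc(T^n)$ the $n$ copies of the cut $F$ form $n$ parallel "slices", and regrouping the pieces between consecutive slices gives $n$ copies of a tile of width $|F|\le t<k$; this uses the definition of $\cyc$ and the fact (noted in the introduction) that one may split a cyclic composition along any $A$–$B$ edge cut. Third, iterate: the width strictly decreases, so after finitely many steps we reach a tile whose associated max flow has value equal to its width, i.e., a weakly linked tile $T_0$, with width $t\le k$ and $\cyc(T_0^n)=\cyc(T^n)$ for all $n\ge1$.

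The step I expect to be the main obstacle is the second one: carefully showing that cutting $G$ along the min edge cut $F$ and regrouping across the $n$ slices really does reproduce $\cyc(T^n)$ as $\cyc(T_1^n)$ on the nose (not merely up to homeomorphism or up to crossing number). One has to be careful that the cut $F$ need not be "clean" — a vertex of $G$ may have edges of $F$ on both sides, and vertices of $A$ or $B$ may lie on the wrong side of $F$ or even be incident with edges of $F$ — so the bookkeeping of which half-edges become the new $A_1$ and which become $B_1$, and in what order, requires care, as does handling multiplicities in the sequences. I would handle this by first reducing to the case where $F$ is an edge cut between two vertex sets $V_A\ni A$ and $V_B\ni B$ (pushing any "ambiguous" vertices to one side), then defining $T_1$ by deleting the edges of $F$ and letting $A_1$ (resp. $B_1$) list, in a fixed order, the $V_B$-endpoints (resp. $V_A$-endpoints) of the edges of $F$; the external edges of $\cyc(T_1^n)$ then exactly play the role of the edges of $F$ in $\cyc(T^n)$, with one slice of internal edges of $T$ becoming part of each copy of the new underlying graph. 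The remaining verification that the two cyclic graphs coincide is then a routine but slightly tedious diagram chase.
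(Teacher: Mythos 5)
Your overall plan coincides with the paper's: attach a super-source $a$ to the entries of $A$ and a super-sink $b$ to the entries of $B$, take a minimum edge cut, use it to re-cut the cyclic composition into $n$ copies of a tile of strictly smaller width, iterate, and finish with Menger's theorem once the minimum cut equals the width. The genuine gap is in your second step: you claim that a cut of size $t<k$ in the auxiliary network yields an edge cut $F\subseteq E(G)$ separating $A$ from $B$, and that one can ``push ambiguous vertices to one side'' so that $F$ separates $V_A\supseteq A$ from $V_B\supseteq B$. This is false in general, because the minimum cut may be forced to use the auxiliary edges at $a$ or $b$. If some vertex occurs in both $A$ and $B$ (which the definition of a tile permits), then no partition $(V_A,V_B)$ with $A\subseteq V_A$ and $B\subseteq V_B$ exists at all. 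Even with $A$ and $B$ disjoint: let $G$ have vertices $u,v,w$, three parallel edges between $u$ and $w$, $A=(u,v)$ and $B=(w,w)$. The auxiliary network has the cut $\{au\}$ of size $1<k=2$, but the only cut inside $E(G)$ separating $A$ from $B$ has size $3$, so your reduction produces no tile of smaller width and the iteration stalls, even though this tile is not weakly linked.

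The missing idea, which is how the paper proceeds, is to let the cut pass through the \emph{external} edges: a cut edge $aa_j$ is translated into the external edge of $\cyc(T)$ joining the $j$-th entries of $A$ and $B$, and the corresponding interface vertex of the new tile is taken to be the $j$-th entry of $B$ (symmetrically at $b$). Accordingly, the underlying graph of the new tile must be $\cyc(T)$ minus the translated cut edges, not $G$ minus $F$: as you literally define $T_1$ (``deleting the edges of $F$'' from $G$ and taking the cut endpoints as the new interface), the graph $\cyc(T_1^n)$ has $n|E(G)|$ edges while $\cyc(T^n)$ has $n|E(G)|+nk$, because the original external edges end up in no copy of the new underlying graph. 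Your ``regrouping the pieces between consecutive slices'' picture is the right one, and carried out precisely it forces the old external edges (hence all of $\cyc(T)$ minus the cut) into the new tile; with that correction and with cuts allowed to use the auxiliary/external edges, your argument becomes the paper's proof.
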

\begin{proof}
Let $G'$ be the graph obtained from $G$ by adding new vertices
$a$ and $b$ and $k$ edges between $a$ and the elements of $A$ and $k$ edges between $b$ and the elements of $B$.
Let us first consider the case that there exists a minimal edge-cut $S=\{s_1,s_2,\ldots, s_t\}$ in $G'$ separating $a$ from $b$ with
$t<k$.  For $1\le i\le t$, let $u_i$ be the vertex incident with $s_i$ belonging to the component of $G'-S$ containing $a$,
if this vertex is different from $a$.  If $s_i$ is incident with $a$ and the edge $s_i$ joins $a$ with the $j$-th element of $A$,
then let $u_i$ be the $j$-th element of $B$.  Symetrically, let $v_i$ be the other vertex of $s_i$ if it is not equal to $b$, and
let $v_i$ be the corresponding
element of $A$ if $s_i$ is incident with $b$.  Let $T_0$ be the tile $(G'',(v_1,\ldots,v_t), (u_1,\ldots, u_t))$, where
$G''$ is the graph obtained from $\cyc(T)$ by removing the edges $u_1v_1, u_2v_2, \ldots, u_tv_t$. It is easy to see that
$\cyc(T_0^n)=\cyc(T^n)$ for every $n\ge 1$.

Since the width of $T_0$ is smaller than the width of $T$, we can perform this transformation only a bounded number of times.
Eventually, we obtain a tile $T_0 = (G'',(v_1,\ldots,v_t), (u_1,\ldots, u_t))$ of width $t\le k$ such that every edge-cut between the added vertices $a$ and $b$ has size at least $t$.
By Menger's theorem, there exist pairwise edge-disjoint paths $P_1$, \ldots, $P_t$ in $G''$ and a permutation $\pi\colon[t]\to [t]$
such that for $1\le i\le t$, the endvertices of $P_i$ are $v_i$ and $u_{\pi(i)}$. As mentioned above, we also have $\cyc(T_0^n)=\cyc(T^n)$ for every $n\ge 1$.
\end{proof}

Observe that if $T_1$ and $T_2$ are weakly linked tiles and $\pi_1$ and $\pi_2$ are the corresponding
permutations, then $T_1T_2$ is weakly linked for the permutation $\pi_2\circ\pi_1$.

\begin{proposition}\label{prop:3}
Let\/ $T$ be a weakly linked tile and let $\pi$ be the permutation from the definition of weakly linked tiles.
Let $m$ be the least common multiple of the lengths of the cycles of the permutation $\pi$. Then, the tile $T^m$ is linked.
\end{proposition}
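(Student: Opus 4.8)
The plan is as follows. We want to show that $T^m$ is linked, i.e., that there are pairwise edge-disjoint paths in the underlying graph of $T^m$ joining the $i$-th vertex of its left sequence to the $i$-th vertex of its right sequence (with the identity permutation), where $m = \mathrm{lcm}$ of the cycle lengths of $\pi$.

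First I would recall from the observation preceding the statement that if $T$ is weakly linked with permutation $\pi$, then $T^n$ is weakly linked with permutation $\pi^n$; in particular $T^m$ is weakly linked with permutation $\pi^m = \mathrm{id}$ (since $m$ is a common multiple of all cycle lengths of $\pi$, $\pi^m$ fixes every element). At first glance this looks like it already gives the conclusion, but there is a subtlety: "weakly linked with the identity permutation" is not literally the same as "linked," because the system of edge-disjoint paths realizing $\pi^m = \mathrm{id}$ inside $T^m$ is the one obtained by concatenating the $P_i$'s from the $m$ copies of $T$, and in that concatenation the path leaving the $i$-th vertex of $A$ in the first copy need not return to the $i$-th vertex in the final copy — rather, it traces the $\pi$-orbit of $i$ across the copies and comes back to $i$ precisely because $\pi^m(i)=i$. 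So I would make this explicit: for each $i \in [k]$, define the path $Q_i$ in the underlying graph of $T^m$ to be the concatenation $P_i^{(1)} \, e^{(1)} \, P_{\pi(i)}^{(2)} \, e^{(2)} \cdots e^{(m-1)} \, P_{\pi^{m-1}(i)}^{(m)}$, where $P_j^{(\ell)}$ denotes the copy of $P_j$ in the $\ell$-th tile and $e^{(\ell)}$ is the external edge joining the $\pi^{\ell}(i)$-th vertex of $B$ in copy $\ell$ to the corresponding vertex of $A$ in copy $\ell+1$. Since $\pi^m(i)=i$, this path starts at the $i$-th vertex of $A_1$ and ends at the $i$-th vertex of $B_m$, which are exactly the $i$-th entries of the left and right sequences of $T^m$.

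The only thing left to check is pairwise edge-disjointness of the $Q_i$. For the internal edges this is immediate: within a single copy $\ell$, the paths $Q_1, \ldots, Q_k$ use, respectively, the copies of $P_{\pi^{\ell-1}(1)}, \ldots, P_{\pi^{\ell-1}(k)}$, and since $\pi^{\ell-1}$ is a bijection of $[k]$ these are exactly the $k$ distinct paths $P_1, \ldots, P_k$, which are pairwise edge-disjoint by hypothesis. For the external edges between copy $\ell$ and copy $\ell+1$: the path $Q_i$ uses the external edge at position $\pi^{\ell}(i)$, and again since $\pi^\ell$ is a bijection, distinct indices $i$ give distinct positions, hence distinct external edges. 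Thus no two of the $Q_i$ share an edge, and $T^m$ is linked. I do not anticipate a genuine obstacle here — the content is entirely bookkeeping about how the edge-disjoint path system of a composed tile decomposes across copies and how iterating $\pi$ eventually closes up each orbit; the only place to be careful is matching the indices of external edges to the iterates $\pi^{\ell}(i)$ correctly.
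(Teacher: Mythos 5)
Your proof is correct and takes essentially the same route as the paper: the paper simply invokes the observation stated just before the proposition (composing weakly linked tiles composes the permutations), concludes that $T^m$ is weakly linked for $\pi^m=\mathrm{id}$, and notes this is by definition linkedness; your write-up just verifies that observation explicitly by concatenating the paths and checking edge-disjointness. One small remark: being weakly linked with a path system realizing the identity permutation \emph{is} literally the definition of linked, so the ``subtlety'' you flag is really only the bookkeeping you then carry out correctly.
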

\begin{proof}
As we observed, the tile $T^m$ is weakly linked for the permutation $\pi^m$, which is the identity permutation.
Consequently, $T^m$ is linked.
\end{proof}

The following proposition shows that the components of linked tiles are themselves tiles (this is not
necessarily the case in general; e.g., it is possible that a component contains different numbers of vertices
from $A$ and $B$).

\begin{proposition}\label{prop:2}
Let\/ $T=(G,A,B)$ be a linked tile, where $G$ is the disjoint union of graphs $G_1$ and $G_2$.
Let $A_1$ and $A_2$ be the sequences of vertices of $A$ in $G_1$ and $G_2$, respectively,
in the same order as in $A$.
Let $B_1$ and $B_2$ be the sequences of vertices of $B$ in $G_1$ and $G_2$, respectively,
in the same order as in $B$.
Then $T_1=(G_1,A_1,B_1)$ and $T_2=(G_2,A_2,B_2)$ are linked tiles
and for every $n\ge 1$, $\cyc(T^n)$ is the disjoint union of $\cyc(T_1^n)$ and $\cyc(T_2^n)$.
\end{proposition}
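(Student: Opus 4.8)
The plan is to show that the linking paths in $T$ cannot cross between $G_1$ and $G_2$, so that the sequences $A_i, B_i$ have matching lengths and the tile structure splits cleanly. First I would observe that since $T$ is linked, there are pairwise edge-disjoint paths $P_1,\dots,P_k$ where $P_i$ joins the $i$-th element of $A$ to the $i$-th element of $B$. Because $G$ is the disjoint union of $G_1$ and $G_2$, each path $P_i$ lies entirely in $G_1$ or entirely in $G_2$. Hence the $i$-th element of $A$ lies in $G_j$ if and only if the $i$-th element of $B$ lies in $G_j$, for $j\in\{1,2\}$. This matching is exactly what guarantees that $|A_1|=|B_1|$ and $|A_2|=|B_2|$, so $T_1$ and $T_2$ are legitimate tiles; moreover, restricting the collection $\{P_i\}$ to the indices landing in $G_j$ gives pairwise edge-disjoint paths witnessing that $T_j$ is linked (after relabelling the indices, which is consistent because $A_j$ and $B_j$ were taken in the same relative order).

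Next I would verify the decomposition of $\cyc(T^n)$. The graph $\cyc(T^n)$ is built from $n$ disjoint copies of $G$ — equivalently $n$ copies of $G_1$ together with $n$ copies of $G_2$ — by adding, between consecutive copies (cyclically), an external edge for each index $i\in[k]$ joining the $i$-th vertex of the $B$-sequence in one copy to the $i$-th vertex of the $A$-sequence in the next. By the index-matching established above, each such external edge has both endpoints in copies of $G_1$ (if index $i$ is a "$G_1$-index") or both endpoints in copies of $G_2$ (if index $i$ is a "$G_2$-index"); there is no external edge joining a $G_1$-part to a $G_2$-part. Therefore the external edges split into two groups, one producing exactly the external edges of $\cyc(T_1^n)$ and the other producing exactly the external edges of $\cyc(T_2^n)$, and $\cyc(T^n)$ is the disjoint union $\cyc(T_1^n)\,\sqcup\,\cyc(T_2^n)$.

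The only mildly delicate point — and the one I would state carefully — is the bookkeeping of the index sets: one should note that the set $I_j\subseteq[k]$ of indices with $a_i\in V(G_j)$ equals the set of indices with $b_i\in V(G_j)$, that $\{I_1,I_2\}$ partitions $[k]$, and that the order-preserving bijection from $I_j$ to $[|A_j|]$ used to define $A_j$ is the same one used to define $B_j$ (this is where the hypothesis "in the same order as in $A$/$B$" is used). Once this is in place, everything else is a direct unwinding of the definitions of $T_1T_2$ and $\cyc(\cdot)$, with no real obstacle; the linkedness hypothesis is doing all the work by forbidding the pathological situation described in the remark preceding the proposition.
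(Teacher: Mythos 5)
Your proposal is correct and follows essentially the same argument as the paper: since each linking path $P_i$ must lie entirely in $G_1$ or in $G_2$, the $i$-th entries of $A$ and $B$ lie in the same component, which gives both the linkedness of $T_1$ and $T_2$ and the clean splitting of the external edges in $\cyc(T^n)$. Your extra bookkeeping about the index sets $I_1,I_2$ is a more explicit spelling-out of what the paper leaves implicit, but it is the same proof.
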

\begin{proof}
Let $k$ be the width of $T$ and consider some $i\in\{1,\ldots, k\}$.  Let $a_i$ and $b_i$ be the $i$-th vertices of $A$ and $B$, respectively,
and let $P_i$ be the path from the definition of linkedness joining $a_i$ with $b_i$.
Note that either $P_i\subseteq G_1$ or $P_i\subseteq G_2$, and in particular $a_i$ belongs to $G_1$ if and only if $b_i$ belongs to $G_1$.
We conclude that $T_1$ and $T_2$ are linked tiles.  Furthermore, the edges between the same vertices are added in the
constructions of $\cyc(T^n)$ and of $\cyc(T_1^n)$ and $\cyc(T_2^n)$, and thus $\cyc(T^n)=\cyc(T_1^n)\cup \cyc(T_2^n)$.
\end{proof}

Let $T=(G,A,B)$ be a tile of width $k$.  Let $H$ be a graph with vertices $v_1$, \ldots, $v_k$ and no edges, and let $Z$ be the tile $(H,(v_1,\ldots, v_k), (v_1,\ldots, v_k))$.
Let $Z'$ be a copy of the tile $Z$ with vertices $v'_1$, \ldots, $v'_k$.
A \emph{tile drawing of $T$} is a drawing of $ZTZ'$ in a closed disk such that the vertices $v_1,\ldots, v_k, v'_k, v'_{k-1}, \ldots, v'_1$ are drawn in the boundary of the disk in order.

We define $M(T)=\binom{|E(G)|+2k}{2}$.  Note that there exists a tile drawing of $T$ such that any two edges cross at most once; hence,
this drawing has at most $M(T)$ crossings.  By connecting $n$ such tile drawings into a cycle, we conclude that $c_n(T)\le M(T)n$ for every $n\ge 1$.
We often use variants of the following useful observation (an analogous result with $s=1$ was proved by Pinontoan and Richter~\cite{PR1}).

\begin{lemma}\label{lemma-nearc}
Let $T$ be a tile of width $k$ and let $s\ge 1$ an integer such that $T^s$ is connected.  For every $n\ge s+1$, there exists
a tile drawing of $T^n$ with at most $c_n(T)+(8k+1)M(T)s+\binom{2k}{2}$ crossings.
\end{lemma}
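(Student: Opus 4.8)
The plan is to start from an optimal drawing of $\cyc(T^n)$, cut it open at a carefully chosen place, and verify that what remains is almost a tile drawing of $T^n$. Write $G_1,\dots,G_n$ for the $n$ copies of $G$ inside $\cyc(T^n)$ and $E_1,\dots,E_n$ for the groups of $k$ external edges, $E_j$ joining $G_j$ to $G_{j+1}$, indices modulo $n$. Recall $c_n(T)\le M(T)n$, and note that by composing $n$ tile drawings of $T$ with at most $M(T)$ crossings each (the connecting edges can be added without new crossings) one gets a tile drawing of $T^n$ with at most $M(T)n$ crossings.

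If $s+1\le n\le 2s$, the last remark already gives a tile drawing with at most $M(T)n\le 2M(T)s\le(8k+1)M(T)s$ crossings, so we may assume $n\ge 2s+1$. Fix an optimal good drawing $D$ of $\cyc(T^n)$, with exactly $c_n(T)$ crossings. For $j\in[n]$ let $H^*_j$ be the subgraph formed by the copies $G_{j-s+1},\dots,G_{j+s}$ together with all external edge groups among them (in particular $E_j$); since $H^*_j$ is isomorphic to the underlying graph of $T^{2s}=(T^s)^2$, which is connected (two connected copies of the graph underlying $T^s$ joined by $k$ edges), $H^*_j$ is connected. Each edge of $\cyc(T^n)$ lies in at most $2s$ of the graphs $H^*_1,\dots,H^*_n$, so each crossing of $D$ lies on an edge of at most $4s$ of them, whence $\sum_{j=1}^n(\text{crossings of }D\text{ on an edge of }H^*_j)\le 4s\,c_n(T)\le 4s\,M(T)n$. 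Choosing $j$ minimising this summand and cyclically relabelling so that $j=n$, the graph $H^*:=H^*_n$ has at most $q:=4M(T)s$ crossings of $D$ on its edges. Observe that $H^*$ contains the $A$-side vertices $a^{(1)}_1,\dots,a^{(1)}_k$ of $G_1$, the $B$-side vertices $b^{(n)}_1,\dots,b^{(n)}_k$ of $G_n$, and the group $E_n$, and that $\cyc(T^n)$ with the $k$ edges of $E_n$ deleted is exactly the underlying graph $U$ of $T^n$; thus a tile drawing of $T^n$ is precisely a drawing of $U$ together with pendant edges $v_ia^{(1)}_i$ and $v'_ib^{(n)}_i$ ($1\le i\le k$) in a closed disc whose boundary meets $v_1,\dots,v_k,v'_k,\dots,v'_1$ in this cyclic order.

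Now the surgery. Let $F$ be a spanning tree of $H^*$. Inside $D$ its self-crossings are among the at most $q$ crossings on edges of $H^*$, and the only edges of $\cyc(T^n)$ other than $F$ that meet a small regular neighbourhood $N$ of the planarisation of $F$ are those that cross $H^*$, again at most $q$ of them. Pick a point $x$ on $\partial N$ lying in a face $\phi$ of $D$. For each $i$ add a vertex $v_i$ inside $\phi$ near $x$ and draw a pendant edge from $v_i$ running inside $\phi$ to $x$, entering $N$, and then following $F$ to $a^{(1)}_i$; do likewise for $v'_i$ and $b^{(n)}_i$. Routing all $2k$ pendant edges alongside one fixed spanning tree of the planarisation of $F$, they pairwise avoid each other, and each crosses only some of the at most $q$ edges entering $N$ and the at most $q$ non-tree edges it passes, so $O(kq)=O(kM(T)s)$ new crossings appear. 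Then delete the $k$ edges of $E_n$ (this only removes crossings): the result is a drawing of $U$ with the $2k$ pendant vertices and edges, and choosing as outer face the region of $\phi$ incident with all of $v_1,\dots,v_k,v'_1,\dots,v'_k$ places everything in a closed disc with these $2k$ vertices on the boundary, in some cyclic order. Finally, rerouting the $2k$ pendant edges within a thin annulus next to the disc boundary to realise the required order $v_1,\dots,v_k,v'_k,\dots,v'_1$ costs at most $\binom{2k}{2}$ further crossings. This yields a tile drawing of $T^n$ with at most $c_n(T)+O(kM(T)s)+\binom{2k}{2}$ crossings, and a careful accounting of the surgery (choosing $N$ and the routings economically, and tightening the constants in the averaging step) brings the middle term down to $(8k+1)M(T)s$.

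I expect the main obstacle to be precisely this accounting: one has to set up $N$ and route the pendant edges so that, despite the self-crossings of $F$ and the up to $q$ edges passing through $N$, each pendant edge genuinely picks up only $O(M(T)s)$ crossings, and then squeeze the constants so the total lands inside $(8k+1)M(T)s+\binom{2k}{2}$. The combinatorial skeleton — disposing of small $n$, averaging to find a low-crossing connected block of $2s$ copies containing the intended cut, cutting along a spanning tree, and fixing the boundary order with at most $\binom{2k}{2}$ transpositions — should then be routine.
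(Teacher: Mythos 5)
Your overall skeleton (averaging to find a low-crossing region around the intended cut, rerouting the $2k$ boundary connections, then paying at most $\binom{2k}{2}$ to fix the cyclic order on the disk boundary) is the right one, but the surgery step has a genuine gap. You keep the whole drawing and route the $2k$ pendant edges along a spanning tree $F$ of the live block $H^*$. Your justification rests on the claim that ``the only edges of $\cyc(T^n)$ other than $F$ that meet a small regular neighbourhood $N$ of the planarisation of $F$ are those that cross $H^*$, again at most $q$ of them.'' This is false: every edge incident with a vertex of $H^*$ meets every neighbourhood of $F$, because $F$ spans $H^*$ and such an edge emanates from a point of $F$. Concretely, when a pendant edge follows a tree path and has to bypass an intermediate vertex $w$, it must cross some of the edges incident with $w$ that are not on the path (non-tree internal edges of the block and the external edges attached to it). These ``bypass'' crossings are crossings with edges that remain in the final drawing, their number is governed by the degrees along the path (up to order $s(|E(G)|+k)$ per pendant edge, hence order $ks(|E(G)|+k)$ in total), and it is not bounded by $q=4M(T)s$ nor by the crossings already present in $D$. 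Since your other term already uses up $2k\cdot q=8kM(T)s$ of the budget, only $M(T)s+\binom{2k}{2}$ of slack remains, and $8ks(|E(G)|+k)$ does not fit under $M(T)s=s\binom{|E(G)|+2k}{2}$ when $|E(G)|$ is comparable to $k$. So the ``careful accounting'' you defer as routine is exactly the point at which the argument, as set up, does not deliver the stated constant $(8k+1)M(T)s+\binom{2k}{2}$ (it would deliver a bound of the same order, but the lemma claims this specific bound).

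The missing idea, which is how the paper avoids the problem, is to \emph{delete} the low-crossing window rather than route through it while it is still drawn. The paper averages over windows of $s$ consecutive copies (counting crossings incident with the copies, each crossing charged at most $4$ times), removes those $s$ copies to get a drawing of $T^{n-s}$, and draws the $2k$ new edges along paths $P_e$ lying entirely inside the removed copies. Because the interiors of those paths pass only through deleted vertices, a new edge crosses a surviving edge only next to a point where that edge crossed the removed block, so each new edge picks up at most $a'_1\le 4M(T)s$ crossings and there is no bypass term at all. Finally the $s$ deleted copies are replaced by $s$ clean tile drawings of $T$, costing $M(T)s$ --- this is precisely the ``$+1$'' in $(8k+1)$. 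If you modify your surgery accordingly (cut out the cheap block, reroute through the vacated region, then glue back cheap tiles), your averaging and reordering steps go through and the constants come out as stated; as written, the neighbourhood claim and the constant-tightening are the gap.
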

\begin{proof}
Let $\GG$ be a drawing of $\cyc(T^n)$ with $c_n(T)$ crossings.  For $1\le i\le n$, let
$a_i$ denote the number of crossings involving edges incident with the vertices of the $i$-th copy of $T$ in $\GG$,
and let $a'_i=\sum_{j=i}^{i+s-1} a_i$, where $a_{n+x}=a_x$ for $x\ge 1$.
Note that every crossing contributes $1$ to at most four of the numbers $a_i$, and thus
$\sum_{i=1}^n a_i\le 4c_n(T)\le 4M(T)n$ and $\sum_{i=1}^n a'_i\le 4M(T)ns$.  Hence, without loss of generality
we have $a'_1\le 4M(T)s$.

Let $S_1$ be the set of external edges of $\GG$ drawn between the first and the last copy of $T$, and
let $S_2$ be the set of external edges of $\GG$ drawn between the $s$-th and the $(s+1)$-th copy of $T$.
Let $v$ be any vertex of the first tile, and for each $e\in S_1\cup S_2$, let $P_e$ be a path
starting with $e$ and ending in $v$ contained in the first $s$ copies of $T$ (which exists since $T^s$ is connected).

Let $\GG_1$ be the drawing of $T^{n-s}$ obtained from $\GG$ by removing the first $s$ copies of $T$.
The drawing $\GG_1$ has at most $c_n(T)$ crossings.
Let $\GG_2$ be the drawing obtained from $\GG_1$ by adding back the vertex $v$ and for each $e\in S_1\cup S_2$, adding
an edge $e'$ between $v$ and the endvertex of $e$ contained on $\GG_1$, such that $e'$ is drawn along the path $P_e$
(perturbed slightly to avoid edges intersecting in infinite number of points, or three edges intersecting in one point).
Note that for each $e\in S_1\cup S_2$, every intersection of $e'$ with an edge $f$ of $\GG_1$ apears next to an intersection
of $f$ with $P_e$.  Consequently, $e'$ intersects $\GG_1$ in at most $a'_1$ points.

By splitting the vertex $v$ into $2k$ vertices of degree one and shifting the vertices slightly, we can transform
$\GG_2$ into a tile drawing $\GG_3$ of $T^{n-s}$ that extends $\GG_1$, without creating any new intersections with $\GG_1$.
Let $S'=\{e':e\in S_1\cup S_2\}$.  If two edges of $S'$ intersect more than once in the tile drawing $\GG_3$, we can eliminate the two crossings by swapping the parts of the edges between these crossings
and shifting the edges at the crossings slightly (this does not affect the crossings with other edges).
Furthermore, we can eliminate the crossings of edges of $S'$ with themselves by removing parts of the edges.
This way, we transform $\GG_3$ into a tile drawing $\GG_4$ such that no edge of $S'$ intersects itself and each two edges of $S'$ intersect
at most once.  Hence, $\GG_4$ has at most $c_n(T)+2ka'_1+\binom{2k}{2}\le c_n(T)+8kM(T)s+\binom{2k}{2}$ crossings.

By combining $\GG_4$ with $s$ tile drawings of $T$ (each with at most $M(T)$ crossings), we obtain the required tile drawing of $T^n$ with at most
$c_n(T)+(8k+1)M(T)s+\binom{2k}{2}$ crossings.
\end{proof}

For an integer $n\ge 1$, let $t_n(T)$ denote the minimum number of crossings in a tile drawing of $T^n$.
Pinontoan and Richter~\cite{PR1} observed that $t_n(T)$ is subadditive (i.e., for every $n_1,n_2\ge 1$, we have $t_{n_1+n_2}(T)\le t_{n_1}(T)+t_{n_2}(T)$,
and by Fekete's subadditive lemma, the limit $\lim_{n\to\infty} t_n(T)/n$ exists.
Note that a tile drawing of $T^n$ can be
turned into a drawing of $\cyc(T^n)$ with the same number of crossings by identifying the corresponding vertices in the boundary of the drawing
and suppresing the resulting vertices of degree two.  Hence, we have $c_n(T)\le t_n(T)$, and Lemma~\ref{lemma-nearc} gives a rough converse.
Hence, we obtain the following.

\begin{corollary}\label{cor-limac}
Let $T$ be a tile.  If there exists an integer $s\ge 1$ such that $T^s$ is connected, then the limit
$c(T)=\lim_{n\to\infty} c_n(T)/n$ exists and is equal to $\lim_{n\to\infty} t_n(T)/n$.
\end{corollary}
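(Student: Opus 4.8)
The plan is to derive Corollary~\ref{cor-limac} by combining three ingredients already available: the existence of $\lim_{n\to\infty} t_n(T)/n$ via Fekete, the trivial inequality $c_n(T)\le t_n(T)$, and the near-converse supplied by Lemma~\ref{lemma-nearc}. First I would fix an integer $s\ge 1$ with $T^s$ connected, which is given by hypothesis, and set $\tau=\lim_{n\to\infty} t_n(T)/n$; this limit exists because $t_n(T)$ is subadditive and nonnegative, so Fekete's lemma applies (and in fact $\tau=\inf_n t_n(T)/n$).

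Next I would establish the two-sided bound relating $c_n(T)$ and $t_n(T)$. From the observation that a tile drawing of $T^n$ yields a drawing of $\cyc(T^n)$ with the same number of crossings (identify boundary vertices, suppress degree-two vertices), we get $c_n(T)\le t_n(T)$ for all $n$. Conversely, Lemma~\ref{lemma-nearc} gives, for every $n\ge s+1$, a tile drawing of $T^n$ with at most $c_n(T)+(8k+1)M(T)s+\binom{2k}{2}$ crossings, hence
$$
   t_n(T)\le c_n(T) + C,
$$
where $C=(8k+1)M(T)s+\binom{2k}{2}$ is a constant independent of $n$. Combining, $c_n(T)\le t_n(T)\le c_n(T)+C$ for all $n\ge s+1$.

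Finally I would divide through by $n$ and let $n\to\infty$. From $t_n(T)/n - C/n \le c_n(T)/n \le t_n(T)/n$ and $C/n\to 0$, the squeeze theorem yields that $c_n(T)/n$ converges with the same limit as $t_n(T)/n$, namely $\tau$. Hence $c(T)=\lim_{n\to\infty} c_n(T)/n$ exists and equals $\lim_{n\to\infty} t_n(T)/n$.

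There is no serious obstacle here: all the technical work (the construction in Lemma~\ref{lemma-nearc}, the subadditivity of $t_n$) has already been done, and the proof is a short squeeze argument. The only point requiring a word of care is that Lemma~\ref{lemma-nearc} needs $n\ge s+1$, so the comparison $|c_n(T)/n - t_n(T)/n|\le C/n$ is only asserted for large $n$, which is harmless for computing a limit; one should also note explicitly that $t_n(T)\ge 0$ so that Fekete's lemma is applicable in the form that guarantees a finite limit.
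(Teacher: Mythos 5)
Your proof is correct and follows essentially the same route as the paper: the corollary is stated there without a separate proof precisely because it follows from the preceding observations, namely subadditivity of $t_n(T)$ with Fekete's lemma, the inequality $c_n(T)\le t_n(T)$ obtained by closing up a tile drawing, and the additive-constant converse $t_n(T)\le c_n(T)+(8k+1)M(T)s+\binom{2k}{2}$ from Lemma~\ref{lemma-nearc}, followed by the squeeze argument you give. Your explicit attention to the restriction $n\ge s+1$ and to nonnegativity for Fekete is a fine touch but does not change the argument.
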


A simple consequence of Lemma~\ref{lemma-nearc} is the following relationship between $c_n(T)/n$ and $c_m(T)/m$ for $m\gg n$.

\begin{lemma}\label{lemma-upper}
Let $T=(G,A,B)$ be a connected tile of width $k$ and let $\varepsilon>0$ be a real number.
Let $n_2=2\left((8k+1)M(T)+\binom{2k}{2}\right) / \varepsilon$ and $a_0=2M(T)/\varepsilon$.
If $n\ge n_2$ and $m\ge a_0n$, then $c_m(T)/m\le c_n(T)/n+\varepsilon$.
\end{lemma}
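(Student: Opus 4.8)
The plan is to bound $c_m(T)$ from above by passing to tile drawings, where subadditivity is available, and then converting back while paying only a bounded boundary cost. I would use three facts from the excerpt: for every $j\ge 1$, $c_j(T)\le t_j(T)$ and $t_j(T)\le M(T)j$; the function $t_\bullet(T)$ is subadditive, $t_{j_1+j_2}(T)\le t_{j_1}(T)+t_{j_2}(T)$; and Lemma~\ref{lemma-nearc} applied with $s=1$ (legitimate since $T=T^1$ is connected), which for $n\ge 2$ converts a drawing of $\cyc(T^n)$ into a tile drawing of $T^n$ with few extra crossings, giving
\[
  t_n(T)\le c_n(T)+(8k+1)M(T)+\binom{2k}{2}.
\]

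First I would dispose of the degenerate parameter range. If $\varepsilon\ge M(T)$, then $c_m(T)/m\le t_m(T)/m\le M(T)\le\varepsilon\le c_n(T)/n+\varepsilon$ and we are done; so assume $\varepsilon<M(T)$ (in particular $M(T)>0$). Then $1/\varepsilon>1/M(T)$, hence $n\ge n_2>2(8k+1)\ge 2$, so the hypothesis of Lemma~\ref{lemma-nearc} is satisfied, and likewise $a_0=2M(T)/\varepsilon>2$, so $m\ge a_0n>n$.

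Next I would perform the division step. Write $m=qn+r$ with integers $q\ge 1$ and $0\le r<n$. Subadditivity gives $t_m(T)\le q\,t_n(T)+t_r(T)$ if $r\ge 1$ and $t_m(T)\le q\,t_n(T)$ if $r=0$; since $t_r(T)\le M(T)r<M(T)n$, in both cases $t_m(T)\le q\,t_n(T)+M(T)n$. Combining this with $c_m(T)\le t_m(T)$, the displayed bound on $t_n(T)$, the inequality $q\le m/n$, and dividing by $m$, I obtain
\[
  \frac{c_m(T)}{m}\le\frac{c_n(T)}{n}+\frac{(8k+1)M(T)+\binom{2k}{2}}{n}+\frac{M(T)n}{m}.
\]
Finally, $n\ge n_2$ bounds the second term by $\varepsilon/2$, and $m\ge a_0n$ bounds the third term by $M(T)/a_0=\varepsilon/2$; summing yields $c_m(T)/m\le c_n(T)/n+\varepsilon$.

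I do not anticipate a real obstacle: the only delicate points are checking that the hypothesis $n\ge s+1$ of Lemma~\ref{lemma-nearc} holds (taken care of by reducing to the case $\varepsilon<M(T)$) and handling the remainder $r$ of the division (absorbed by the crude bound $t_r(T)<M(T)n$, which is precisely why both $n_2$ and the ratio $m/n$ must grow linearly in $1/\varepsilon$). Beyond these, the argument is routine Fekete-style bookkeeping, the essential input being the comparison between $t_n(T)$ and $c_n(T)$ furnished by Lemma~\ref{lemma-nearc}.
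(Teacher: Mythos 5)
Your proof is correct and follows essentially the same route as the paper: apply Lemma~\ref{lemma-nearc} with $s=1$ to the optimal cyclic drawing of $\cyc(T^n)$, then paste $\lfloor m/n\rfloor$ copies of the resulting tile drawing together with cheap copies of a tile drawing of $T$ for the remainder — your use of $c_m(T)\le t_m(T)$ and subadditivity of $t_\bullet(T)$ is just a formal repackaging of the paper's explicit combination of drawings. Your extra care about the hypothesis $n\ge s+1$ of Lemma~\ref{lemma-nearc} (via the case $\varepsilon\ge M(T)$) is a small point the paper leaves implicit, but the substance is the same.
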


\begin{proof}
By Lemma~\ref{lemma-nearc} applied with $s=1$, there exists a tile drawing $\GG_1$ of $T^n$ with at most $c_n(T)+(8k+1)M(T)+\binom{2k}{2}$ crossings.
Let $\GG_2$ be a tile drawing of $T$ with at most $M(T)$ crossings.  Suppose that $m=an+b$, where $a,b$ are nonnegative integers
and $b\le n-1$.  Let $\GG$ be the drawing of $\cyc(T^m)$ obtained by combining $a$ copies of $\GG_1$ and $b$ copies of $\GG_2$.
Then, $\GG$ has at most $a\bigl(c_n(T)+(8k+1)M(T)+\binom{2k}{2}\bigr)+bM(T)\le \tfrac{m}{n}c_n(T)+\tfrac{m}{n}\bigl((8k+1)M(T)+\binom{2k}{2}\bigr)+nM(T)$
crossings.  Consequently,
$$\frac{c_m(T)}{m}\le \frac{c_n(T)}{n}+\frac{1}{n}\left((8k+1)M(T)+\binom{2k}{2}\right)+\frac{n}{m}M(T)\le \frac{c_n(T)}{n}+\varepsilon.$$
\end{proof}

\begin{corollary}\label{cor-upper}
Let $T$ be a connected tile and let $\varepsilon>0$ be a real number.
Let $n_2$ be as in Lemma~\ref{lemma-upper}.  If $n\ge n_2$, then $c(T)\le c_n(T)/n+\varepsilon$.
\end{corollary}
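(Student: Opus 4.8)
The plan is to derive this directly from Lemma~\ref{lemma-upper} by letting $m\to\infty$. The one prerequisite is that $c(T)$ be well-defined, i.e., that $\lim_{m\to\infty} c_m(T)/m$ exists: since $T$ is connected, $T^1=T$ is connected, so Corollary~\ref{cor-limac} applies with $s=1$ and the limit exists (and equals $\lim_{m\to\infty} t_m(T)/m$, though we do not need that here).

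Now fix any $n\ge n_2$ and let $a_0=2M(T)/\varepsilon$ be the constant from Lemma~\ref{lemma-upper}. For every integer $m\ge a_0 n$, that lemma gives
$$\frac{c_m(T)}{m}\le \frac{c_n(T)}{n}+\varepsilon.$$
The right-hand side is independent of $m$, so taking the limit as $m\to\infty$ (which is legitimate precisely because the limit exists) yields
$$c(T)=\lim_{m\to\infty}\frac{c_m(T)}{m}\le \frac{c_n(T)}{n}+\varepsilon,$$
which is the claimed bound.

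There is essentially no obstacle here; the only point requiring attention is the order of quantifiers — we first fix $n$ (large enough that $n\ge n_2$), and only afterwards let $m$ range over all integers exceeding the threshold $a_0 n$ — together with the observation that passing to the limit in the inequality is valid because, by Corollary~\ref{cor-limac} and the connectivity of $T$, the sequence $c_m(T)/m$ genuinely converges to $c(T)$.
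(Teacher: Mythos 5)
Your proof is correct and is exactly the argument the paper intends: the corollary follows from Lemma~\ref{lemma-upper} by fixing $n\ge n_2$ and letting $m\to\infty$, with the existence of the limit $c(T)$ guaranteed by Corollary~\ref{cor-limac} (applied with $s=1$, since $T$ is connected). Your attention to the quantifier order and to the justification for passing to the limit is appropriate but does not change the route.
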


Before we proceed with the proof of the main theorem, let us argue that the limit $c(T)$ exists
even if $T$ is not connected.

\begin{lemma}\label{lemma-limit}
For every tile $T=(G,A,B)$ of width $k$, the limit $c(T)=\lim_{n\to\infty} c_n(T)/n$ exists.
Furthermore, there exist integers $m\le k!$ and $r\le m|V(G)|$ and connected linked tiles $T_1$, \ldots, $T_r$
with at most $m|V(G)|$ vertices such that $c(T)=(c(T_1)+\ldots+c(T_r))/m$.
\end{lemma}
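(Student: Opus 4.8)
The plan is to reduce to connected linked tiles and then to patch the limit together from the subsequence along multiples of a suitable period. First I would apply Proposition~\ref{prop:1} to get a weakly linked tile $T_0$ of width $t\le k$ with $\cyc(T_0^n)=\cyc(T^n)$ for all $n\ge 1$; inspecting its proof, the underlying graph of $T_0$ is obtained from $\cyc(T)$ by deleting edges, hence has exactly $|V(G)|$ vertices. Next, Proposition~\ref{prop:3} yields an integer $m$ that is the order of the associated permutation of $[t]$, so $m$ divides $t!$ and therefore $m\le k!$, such that $T_0^m$ is linked. Iterating Proposition~\ref{prop:2}, the linked tile $T_0^m$ decomposes into its components $T_1,\dots,T_r$, which are connected linked tiles; since the underlying graph of $T_0^m$ has $m|V(G)|$ vertices, $r\le m|V(G)|$ and each $T_i$ has at most $m|V(G)|$ vertices, and moreover $\cyc((T_0^m)^n)$ is the disjoint union of the graphs $\cyc(T_i^n)$, so that (as the crossing number of a disjoint union is the sum of the crossing numbers) $c_{mn}(T)=c_n(T_0^m)=\sum_{i=1}^r c_n(T_i)$. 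Each $T_i$ is connected, so $c(T_i)=\lim_n c_n(T_i)/n$ exists by Corollary~\ref{cor-limac}; set $L=(c(T_1)+\dots+c(T_r))/m$. Dividing the identity $c_{mn}(T)=\sum_i c_n(T_i)$ by $mn$ shows $c_{mn}(T)/(mn)\to L$, i.e.\ the limit holds along multiples of $m$; it remains to prove $c_n(T)/n\to L$ for all $n$, which also gives the ``furthermore'' clause.

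For $\limsup_n c_n(T)/n\le L$, write $n=qm+l$ with $0\le l<m$ and note $\cyc(T^n)=\cyc(T_0^n)=\cyc\bigl((T_0^m)^q\,T_0^l\bigr)$. By Lemma~\ref{lemma-nearc} applied with $s=1$, each $T_i^q$ (for $q\ge 2$) has a tile drawing with at most $c_q(T_i)+(8k_i+1)M(T_i)+\binom{2k_i}{2}$ crossings; placing the $r$ resulting disk drawings side by side and routing the boundary ports into the cyclic order required by $T_0^m$ costs at most $\binom{2t}{2}$ further crossings, so $(T_0^m)^q$ has a tile drawing with at most $c_q(T_0^m)+C_1$ crossings for a constant $C_1$. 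Composing this with $l<m$ tile drawings of $T_0$ (each of at most $M(T_0)$ crossings) and closing up gives a drawing of $\cyc(T^n)$ with at most $c_{qm}(T)+D$ crossings, $D=C_1+mM(T_0)$; hence $c_n(T)/n\le (c_{qm}(T)+D)/(qm)\to L$.

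The main obstacle is the matching lower bound $\liminf_n c_n(T)/n\ge L$. The plan is to establish $c_{mn}(T)\le m\,c_n(T)+O(1)$; combined with $c_{mn}(T)=\sum_i c_n(T_i)\ge \sum_i\bigl(n\,c(T_i)-C_i'\bigr)=mnL-O(1)$ — where $c_n(T_i)\ge t_n(T_i)-C_i'$ by Lemma~\ref{lemma-nearc} ($s=1$) and $t_n(T_i)\ge n\,c(T_i)$ because $t_n(T_i)$ is subadditive with $\inf_n t_n(T_i)/n=c(T_i)$ by Corollary~\ref{cor-limac} — this yields $c_n(T)\ge nL-O(1)$ and finishes the proof. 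Since $\cyc(T_0^{mn})=\cyc\bigl((T_0^n)^m\bigr)$, to bound $c_{mn}(T_0)$ it suffices to convert an optimal drawing of $\cyc(T_0^n)$ into a tile drawing of $T_0^n$ with only $O(1)$ extra crossings, and then to cycle $m$ copies of it. For connected tiles this conversion is exactly the content of Lemma~\ref{lemma-nearc}, but for the merely weakly linked tile $T_0$ it must be adapted, and I expect this adaptation to be the technical heart: after deleting a seam $S_{j^*}$ carrying at most $2c_n(T_0)/n\le 2M(T_0)$ crossings, the endpoints of the deleted external edges — the ``ports'' of the prospective tile — need not lie on a common face, because $T_0$ may be disconnected, so the argument of Lemma~\ref{lemma-nearc} cannot route them to a single vertex $v$ using connectivity alone. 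Here the pairwise edge-disjoint weak-linkedness paths of $T_0$ in the copies of $G_0$ incident with $S_{j^*}$ take over the role of connectivity, carrying the deleted external edges to a common point whose splitting produces a tile drawing of $T_0^n$; the only side effect is a ``twist'' permutation $\pi^l$ introduced at one junction, which, since every $T_i$ is connected, is absorbed at cost $O(1)$ (twisting one junction of $\cyc(T_i^q)$ changes its crossing number by at most $\binom{2k_i}{2}$, again by Lemma~\ref{lemma-nearc}) — and, when $\pi^l$ does not respect the component partition of $T_0^m$, by first passing to a suitable power so that the twist becomes trivial on each component. With this conversion in hand, $c_{mn}(T)\le m\,c_n(T)+O(1)$, and together with the upper bound we conclude $c_n(T)/n\to L=(c(T_1)+\dots+c(T_r))/m$ with $m\le k!$, $r\le m|V(G)|$, and each $T_i$ a connected linked tile on at most $m|V(G)|$ vertices.
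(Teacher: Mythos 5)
Your reduction via Propositions~\ref{prop:1}--\ref{prop:3}, the identity $c_{mn}(T)=\sum_{i=1}^r c_n(T_i)$, the existence of each $c(T_i)$ via Corollary~\ref{cor-limac}, and the upper bound $\limsup_n c_n(T)/n\le L$ are all sound and consistent with the paper. The genuine gap is in your lower bound, specifically in the claimed conversion of an optimal drawing of $\cyc(T_0^n)$ into a tile drawing of $T_0^n$ (up to a twist) at cost $O(1)$, which is what your inequality $c_{mn}(T)\le m\,c_n(T)+O(1)$ rests on. The proposed mechanism --- rerouting the external edges of a cheap seam along the weak-linkedness paths of the incident copies of $G_0$ ``to a common point'' --- cannot work when $T_0$ is disconnected: those paths join $A$-ports to $B$-ports inside single copies and never leave a connected component of $\cyc(T_0^n)$, and distinct components of $\cyc(T_0^n)$ contain no connecting paths at all, so no rerouting argument can gather all the dangling seam edges at one point. (This is exactly why Lemma~\ref{lemma-nearc} assumes some power $T^s$ is connected; for a disconnected weakly linked $T_0$ no power need be connected.) What rerouting along the weak-linkedness paths actually gives is the Lemma~\ref{lemma-elimbad}-type surgery, i.e.\ a drawing of a (twisted) cyclic composition on $n-1$ copies, not a disk drawing with the ports of $T_0^n$ on the boundary; to cut the cyclic drawing open you must treat each connected component of $\cyc(T_0^n)$ separately, and to do that with a cost independent of $n$ you need to know what these components are.

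That structural information is precisely what your sketch is missing and what the paper's proof supplies. Writing $T'_i$ for the part of the component $T_i$ of $T^m$ lying in one copy of $T$, there is a permutation $\pi$ of $[r]$ with $T_i=T'_iT'_{\pi(i)}\cdots T'_{\pi^{m-1}(i)}$, and for \emph{every} $n\ge m$ (not only multiples of $m$) one has the exact identity that $\cyc(T^n)$ is the disjoint union of $\gcd(n,\ell_i)$ copies of $\cyc\bigl(S_i^{n/\gcd(n,\ell_i)}\bigr)$, where $S_i$ is the composition of the $T'_j$ along the $i$-th cycle of $\pi$ and $\ell_i$ its length. Since $S_i^{m/\ell_i}=T_{b_i}$ is connected, Corollary~\ref{cor-limac} gives the limits $c(S_i)$, and $c_n(T)/n\to\sum_i c(S_i)=\bigl(c(T_1)+\dots+c(T_r)\bigr)/m$ follows directly, with no need for $c_{mn}(T)\le m\,c_n(T)+O(1)$, no $\limsup/\liminf$ split, and no twist bookkeeping (which in your sketch is also shaky, since the number and identity of the components of $\cyc(T_0^n)$ vary with $\gcd(n,\ell_i)$, so ``passing to a suitable power'' changes which graphs are being compared). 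Any repair of your route would have to establish this component decomposition first, at which point the shorter direct argument is already available.
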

\begin{proof}
By Proposition~\ref{prop:1}, we can assume that $T$ is weakly linked.  Let $m$ be as in Proposition~\ref{prop:3}
and let $T_1$, \ldots, $T_r$ be the maximal connected subtiles of $T^m$.  By Proposition~\ref{prop:2},
these subtiles are linked.  Furthermore, the limits $c(T_1)$, \ldots, $c(T_r)$ exist by Corollary~\ref{cor-limac}.

For $1\le i\le r$, let $T'_i$ be the subtile of $T_i$ contained in the first copy of $T$ in $T^m$.
There exists a permutation $\pi$ of $[r]$ such that $T_i=T'_iT'_{\pi(i)}T'_{\pi^2(i)}\ldots T'_{\pi^{m-1}(i)}$.
Let $t$ be the number of cycles of $\pi$, and for $1\le i\le t$, let $\ell_i$ be the length of the $i$-th cycle of $\pi$, let $b_i$
an arbitrary element of the $i$-th cycle of $\pi$, and let $S_i=T'_{b_i}T'_{\pi(b_i)}T'_{\pi^2(b_i)}\ldots T'_{\pi^{\ell_i-1}(b_i)}$.
Then, for every $n\ge m$, $\cyc(T^n)$ is the disjoint union consisting of $\gcd(n,\ell_i)$ copies of $\cyc\left(S_i^{n/\gcd(n,\ell_i)}\right)$ for
$i=1,\ldots,t$.  Note that $S_i^{m/\ell_i}=T_{b_i}$ is connected.
Hence, by Corollary~\ref{cor-limac}, the limit $c(S_i)$ exists.

Consider any $\varepsilon>0$, and let $n_0\ge m$ be large enough that $|c_n(S_i)/n-c(S_i)|\le \varepsilon/t$ for every $n\ge n_0$ and $1\le i\le t$.
Let us remark that $\ell_i$ divides $m$, and thus $\ell_i\le m$.
Hence, for any $n\ge n_0m$, we have
\begin{eqnarray*}
\left|\frac{c_n(T)}{n}-\sum_{i=1}^t c(S_i)\right| &=& \left|\sum_{i=1}^t \frac{\gcd(n,\ell_i)}{n} \, c_{n/\gcd(n,\ell_i)}(S_i)-c(S_i)\right|\\
&\le&\sum_{i=1}^t\left|\frac{c_{n/\gcd(n,\ell_i)}(S_i)}{n/\gcd(n,\ell_i)} - c(S_i)\right|\\[1mm]
&\le&\varepsilon.
\end{eqnarray*}
We conclude that $c(T)=\lim_{n\to\infty} c_n(T)/n$ exists (and is equal to $\sum_{i=1}^t c(S_i)$).
Clearly, $c(T)=c(T^m)/m$, and by Proposition~\ref{prop:2}, we have $c(T^m)=\sum_{i=1}^r c(T_i)$.
Hence, $c(T)=(c(T_1)+\ldots+c(T_r))/m$ as required.
\end{proof}

\section{Drawings of periodic graphs}

Let $T=(G,A,B)$ be a tile and $n$ an integer.  Consider a drawing $\GG$ of $\cyc(T^n)$ and let $\beta>0$ be a real number.
The \emph{weight} of a crossing between two internal edges is $1+2\beta$, between an internal and an external edge
is $1+\beta$ and between two external edges is $1$.
Let $\crn_\beta(\GG)$ be the sum of the weights of the crossings in the drawing $\GG$.
The reason for introducing this weighted crossing number is the following lemma.

\begin{lemma}\label{lemma-elimbad}
Let $T=(G,A,B)$ be a linked tile of width $k$, let $n$ be an integer and let $\alpha\ge0$, $Q$ and $\beta>0$ be real numbers.
Let $n\ge 2$ be an integer and let $\GG$ be a drawing of $\cyc(T^n)$ such that $\crn_\beta(\GG)\le \alpha\, n+Q$.
If the internal edges of some copy of $T$ in the drawing $\GG$ participate in
at least $\tfrac{1}{\beta}\bigl(\binom{k}{2}+\alpha\bigr)$ crossings,
then there exists a drawing\/ $\GG'$ of\/ $\cyc(T^{n-1})$ with $\crn_\beta(\GG')\le \alpha (n-1)+Q$.
\end{lemma}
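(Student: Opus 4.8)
The plan is to delete the offending copy of $T$ — call it the $i$-th copy, with underlying graph a copy $G_i$ of $G$ — and to reconnect the two neighbouring copies of $T$ directly, using the linkedness of $T$ to route the new external edges cheaply. Concretely, let $S^-$ be the set of external edges joining the $(i-1)$-th copy to the $i$-th copy, and $S^+$ the set of external edges joining the $i$-th copy to the $(i+1)$-th copy; each has exactly $k$ edges. Since $T$ is linked, inside $G_i$ there are $k$ pairwise edge-disjoint paths $P_1,\dots,P_k$, where $P_j$ joins the $j$-th vertex of $A_i$ to the $j$-th vertex of $B_i$. The $j$-th edge of $S^-$ enters $G_i$ at the $j$-th vertex of $A_i$, and the $j$-th edge of $S^+$ leaves $G_i$ at the $j$-th vertex of $B_i$; so for each $j$ we can form a single curve from the $j$-th edge of $S^-$, along $P_j$, and out along the $j$-th edge of $S^+$, yielding a new external edge of $\cyc(T^{n-1})$ connecting the $(i-1)$-th and $(i+1)$-th copies.

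Next I would bound the cost of this surgery. Deleting $G_i$ removes all crossings in which an internal edge of the $i$-th copy participates; by hypothesis there are at least $\tfrac{1}{\beta}\bigl(\binom{k}{2}+\alpha\bigr)$ such crossings, and each such crossing has weight at least $1+\beta$, so the weighted crossing number drops by at least $(1+\beta)\cdot\tfrac{1}{\beta}\bigl(\binom{k}{2}+\alpha\bigr)\ge \tfrac{1}{\beta}\bigl(\binom{k}{2}+\alpha\bigr)$. (Actually the simpler estimate that each removed crossing has weight $\ge 1$ suffices for the count, but we gain from the fact that internal–internal and internal–external crossings are heavier.) On the other hand, the $k$ new external edges, drawn along the $P_j$, inherit crossings: a crossing of the new $j$-th curve with some other edge $f$ sits next to a crossing of $P_j$ with $f$. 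Because the $P_j$ are pairwise edge-disjoint and live inside $G_i$, and $f$ ranges over edges of $\cyc(T^{n-1})$ other than the new curves, the total number of new crossings created between the new external edges and the old edges is at most the number of crossings in $\GG$ involving an internal edge of the $i$-th copy — which we have just deleted and already accounted for, so these do not add to the weighted count beyond what was removed; the only genuinely new crossings are between two of the new external edges themselves. These are external–external crossings, of weight $1$, and there are at most $\binom{k}{2}$ of them (after the standard cleanup: reroute to make any two of the new curves cross at most once, and remove self-intersections, as in the proof of Lemma~\ref{lemma-nearc}).

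Putting the bookkeeping together: passing from $\GG$ to $\GG'$, the weighted crossing number changes by at most $-\tfrac{1}{\beta}\bigl(\binom{k}{2}+\alpha\bigr)+\binom{k}{2}\le -\alpha$, using $\beta>0$ and $\tfrac{1}{\beta}\binom{k}{2}\ge\binom{k}{2}$ only when $\beta\le 1$ — so here one must be slightly careful and instead argue directly that the weight removed, $\ge\tfrac{1}{\beta}\binom{k}{2}+\tfrac{\alpha}{\beta}$, minus the weight added, $\le\binom{k}{2}$, is at least $\alpha$, which follows because $\tfrac{1}{\beta}\binom{k}{2}\ge\binom{k}{2}$ fails for $\beta>1$; the clean way is that the removed crossings have weight $\ge(1+\beta)$ each, so removed weight $\ge(1+\beta)\cdot\tfrac1\beta\bigl(\binom k2+\alpha\bigr)=\tfrac1\beta\binom k2+\binom k2+\tfrac\alpha\beta+\alpha\ge\binom k2+\alpha$, hence $\crn_\beta(\GG')\le\crn_\beta(\GG)-\alpha\le\alpha n+Q-\alpha=\alpha(n-1)+Q$, as required.

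The main obstacle is the crossing bookkeeping for the rerouted external edges: one must verify carefully that every crossing of a new curve with an old edge corresponds bijectively (up to the small perturbations) to a crossing of some $P_j$ with that old edge, that the edge-disjointness of the $P_j$ prevents double-counting, and that after deleting $G_i$ these crossings are genuinely gone rather than merely relocated — together with the cleanup step that caps the new external–external crossings at $\binom{k}{2}$. This is exactly the type of argument carried out in Lemma~\ref{lemma-nearc}, so I would model the details on that proof.
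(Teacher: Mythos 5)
Your surgery is the same as the paper's (delete the heavily crossed copy, splice each incoming external edge to the corresponding outgoing one along the linkedness paths $P_1,\dots,P_k$, then clean up so the new curves pairwise cross at most once), but the weight accounting is where the proof actually lives, and yours has a genuine gap. You credit yourself with the full weight of every crossing involving an internal edge of the deleted copy (at least $1+\beta$ each, total at least $(1+\beta)\cdot\tfrac1\beta\bigl(\binom k2+\alpha\bigr)$), and then charge as new cost only the at most $\binom k2$ crossings among the rerouted curves themselves. But the crossings of the rerouted curves with the \emph{old} edges are not ``already accounted for'': they are the old crossings relocated, not removed. A crossing of an internal edge of $G_i$ (lying on $P_j$) with an internal edge of another copy (weight $1+2\beta$) reappears as a crossing of the new external curve with that internal edge, of weight $1+\beta$; a crossing with an external edge (weight $1+\beta$) reappears with weight $1$. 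In the worst case there are up to $c$ such relocated crossings contributing added weight up to $(1+\beta)c$, so your claimed bound ``added weight $\le\binom k2$'' is false, and with the added weight counted honestly your inequality ``removed $\ge\binom k2+\alpha$, added $\le\binom k2$'' no longer yields the drop of $\alpha$. Your own ``main obstacle'' paragraph flags exactly this worry (``genuinely gone rather than merely relocated''), and the answer is: merely relocated.

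The correct bookkeeping -- and the whole reason the weighted crossing number $\crn_\beta$ is introduced -- is per crossing: every crossing involving an internal edge of the deleted copy either disappears or has its weight decreased by at least $\beta$ when that internal edge is replaced by a new external curve ($1+2\beta\to 1+\beta$, or $1+\beta\to 1$). Hence the net gain from the $c\ge\tfrac1\beta\bigl(\binom k2+\alpha\bigr)$ crossings is at least $\beta c\ge\binom k2+\alpha$, which absorbs the at most $\binom k2$ crossings among the rerouted curves after cleanup and still leaves a surplus of $\alpha$, giving $\crn_\beta(\GG')\le\crn_\beta(\GG)-\alpha\le\alpha(n-1)+Q$. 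Note that your $(1+\beta)$-per-crossing gain is exactly what you may \emph{not} use once the relocated crossings are charged; the hypothesis on $c$ is calibrated for a gain of only $\beta$ per crossing.
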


\begin{proof}
Let $c$ be the number of crossings in $\GG$ involving the internal edges of a copy $T'$ of $T$ in $\GG$,
and suppose that $c\ge \tfrac{1}{\beta}\bigl(\binom{k}{2}+\alpha\bigr)$.
Let $P_1$, \ldots, $P_k$ be the drawings of the paths from the definition of the linkedness of $T'$ according to $\GG$.
Let $\GG'_0$ be the drawing of $\cyc(T^{n-1})$ obtained from $\GG$ by removing $T'$ and by connecting the corresponding external edges incident with $T'$ along the paths $P_1$, \ldots, $P_k$.
In case that more than two of the paths intersect in a vertex of the underlying graph of $T'$, we shift the new edges slightly so that at most two edges
intersect in each point.  Let $S$ be the set of the resulting edges.
Note that each of the crossings with the internal edges of $T'$ in $\GG$ either disappears or becomes a crossing with an external edge of $S$ in $\GG'_0$,
and thus its weight is decreased at least by $\beta$.
On the other hand, by drawing the edges along the paths $P_1$, \ldots, $P_k$, we could introduce crossings between the edges of $S$; let $s$ be
the number of these new crossings.  This shows that $\crn_\beta(\GG'_0)\le \crn_\beta(\GG)-\beta c+s$.

If two edges of $S$ intersect more than once, we can eliminate the two crossings by swapping the parts of the edges between these crossings
and shifting the edges at the crossings slightly (this does not affect the crossings with other edges).
Furthermore, we can eliminate the crossings of edges of $S$ with themselves by removing parts of the edges.
In this way, we transform the drawing $\GG'_0$ to a drawing $\GG'$, where the number of crossings among the edges of $S$ is at most
$\binom{k}{2}$.  Thus, we have $\crn_\beta(\GG')\le \crn_\beta(\GG)-\beta c+\binom{k}{2}\le \crn_\beta(\GG)-\alpha\le \alpha (n-1)+Q$, as required.
\end{proof}

For a tile $T$ and two edges $e_1$ and $e_2$ of $\cyc(T^n)$, the \emph{cyclic tile distance} between $e_1$ and $e_2$
is the minimum number of distinct tiles that a path
in $\cyc(T^n)$ between the two edges must intersect.

\begin{lemma}
\label{lemma-nodist}
Let $T=(G,A,B)$ be a connected linked tile of width $k$ and let $\beta,\varepsilon>0$ and $\alpha\ge 0$ be real numbers.
Let $c=\tfrac{1}{\beta}\bigl(\binom{k}{2}+\alpha\bigr)$ and $Q_0=2k(2|E(G)|+2c+4k)(1+\beta)+4k^2+2\alpha$.
Let $n_1\ge n_0\ge 1$ be integers and let $Q\ge Q_0$ be a real number, and suppose that
$n$ is the smallest integer such that $n\ge n_1$ and there exists a drawing $\GG$ of $\cyc(T^n)$
with $\crn_\beta(\GG)\le \alpha n + Q$.
Furthermore, assume that
\begin{itemize}
\item $\GG$ is chosen among the drawings of $\cyc(T^n)$ so that $\crn_\beta(\GG)$ is the smallest possible, and
\item for $n_0\le m< n_1$, there is no drawing $\GG'$ of $\cyc(T^m)$ with $\crn_\beta(\GG')\le \alpha\, m + Q_0$.
\end{itemize}
If $n\ge 2n_1+2$, then the cyclic tile distance between any two crossing edges of $\GG$ is at most $n_0+1$.
\end{lemma}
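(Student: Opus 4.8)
The plan is to argue by contradiction: assuming $\GG$ has crossing edges $e_1,e_2$ of cyclic tile distance at least $n_0+2$, I will excise a long interval of copies of $T$ from $\cyc(T^n)$ and reglue the rest, obtaining a drawing $\GG'$ of $\cyc(T^{n'})$ with $n_0\le n'<n$ and $\crn_\beta(\GG')\le \alpha n'+Q_0$. This is impossible: if $n'\ge n_1$ it contradicts the minimality of $n$ (as $Q_0\le Q$), and if $n_0\le n'<n_1$ it contradicts the second of the two itemized hypotheses.

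Two preparatory observations come first. Since $n\ge 2n_1+2$ we have $n-1\ge n_1$; hence if the internal edges of some copy of $T$ in $\GG$ took part in at least $c=\tfrac1\beta\bigl(\binom k2+\alpha\bigr)$ crossings, Lemma~\ref{lemma-elimbad} would give a drawing of $\cyc(T^{n-1})$ with weighted crossing number at most $\alpha(n-1)+Q$, contradicting the choice of $n$; so I may assume every copy of $T$ in $\GG$ has fewer than $c$ crossings on its internal edges. Moreover, by the minimality of $n$ together with a single application of the copy-deletion operation from the proof of Lemma~\ref{lemma-elimbad}, $\crn_\beta(\GG)$ is at least $\alpha n+Q$ minus a constant depending only on $k$ and $\alpha$; this near-extremality of $\crn_\beta(\GG)$ is what will let the quantity $\alpha n+Q$ be recovered from the excised interval.

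Now the core argument. Since $\GG$ minimizes $\crn_\beta$, no two edges cross twice (an uncrossing move strictly decreases $\crn_\beta$), so $e_1,e_2$ cross exactly once, at a point $x$; let copy $i$ be a copy of $T$ incident with $e_1$ and copy $j$ one incident with $e_2$. The distance hypothesis forces both arcs of copies between copy $i$ and copy $j$ to have at least $n_0$ copies in their interior, so in particular $n\ge 2n_0+2$ and the excised interval below will have at least $n_0$ and at most $n-n_0-2$ copies, giving $n_0\le n'<n$. The point $x$ lies at once on an edge of copy $i$ and on an edge of copy $j$; using that $T$ is connected and that the edge-disjoint linkage paths of the definition of linkedness are available in each copy, I would route new external edges from the boundary of copy $i$ along a path in copy $i$ onto $e_1$, through $x$ onto $e_2$, and along a path in copy $j$ to the boundary of copy $j$ --- in effect passing directly from copy $i$ to copy $j$ through the crossing. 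Cutting $\cyc(T^n)$ at the two ends of one of the two arcs between copies $i$ and $j$ and deleting that arc's interior, these rerouted edges reglue the surviving copies into $\cyc(T^{n'})$; I would delete whichever arc-interior carries the larger part of the crossings of $\GG$, so that (using the near-extremality above) the deleted weight is at least roughly $\alpha$ per deleted copy, while the total weight of crossings involving the new edges is at most $2k(2|E(G)|+2c+4k)(1+\beta)+4k^2$ (each new edge being external and confined to copies $i$ and $j$, which have fewer than $c$ internal crossings each and are incident to $2k$ external edges each). Altogether $\crn_\beta(\GG')\le\alpha n'+Q_0$, completing the contradiction.

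The step I expect to be the real obstacle is making this accounting honest: one must choose the two cut positions simultaneously so that (a) the rerouted edges can be drawn through $x$ at cost bounded only in terms of $T$, $k$, $\alpha$, and $\beta$ --- which is why their routes stay inside copies $i$ and $j$ --- and (b) the deleted arc-interior, which has at least $n_0$ copies, really carries enough weighted crossings that $\crn_\beta(\GG')$ falls below $\alpha n'+Q_0$ and not merely below $\crn_\beta(\GG)$. This is where the hypothesis $n\ge 2n_1+2$ (forcing both arcs long, hence at least one arc-interior substantial) and the near-extremality of $\crn_\beta(\GG)$ are both used, and where the bookkeeping of which crossings vanish, which are reweighted, and which are created must be done carefully.
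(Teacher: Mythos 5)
Your reduction to a single drawing $\GG'$ of $\cyc(T^{n'})$ has a genuine gap in the key accounting step. To conclude $\crn_\beta(\GG')\le \alpha n'+Q_0$ you need the \emph{deleted} arc to carry weighted crossings of at least $\alpha$ per deleted copy \emph{plus} the excess $Q-Q_0$ \emph{plus} the rerouting cost, and neither the rule ``delete the arc carrying the larger part of the crossings'' nor the near-extremality bound $\crn_\beta(\GG)\ge \alpha n+Q-\bigl(\alpha+\binom{k}{2}\bigr)$ gives this. Carrying more than half of the total weight says nothing about per-copy density relative to the arc's own length, and even a density bound would not produce the additive constant $Q_0$ (rather than $Q$) that you need when the retained arc is short. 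Concretely, nothing you have established rules out a distribution in which the short arc (of length $\ell_Y$ with $n_0\le \ell_Y<n_1$) carries weight about $\alpha\ell_Y+2Q_0$ while the long arc carries the rest, about $\alpha\ell_X+Q-2Q_0$; then the long arc is heavier, you delete it, and the retained short drawing has weight exceeding $\alpha\ell_Y+Q_0$, so no contradiction arises (while deleting the other arc would have worked, your fixed rule does not). The paper avoids choosing a side altogether: it closes up \emph{both} arcs into cyclic drawings $\GG_1'$ of $\cyc(T^{k_1})$ and $\GG_2'$ of $\cyc(T^{k_2})$, proves $\crn_\beta(\GG_1')+\crn_\beta(\GG_2')\le \crn_\beta(\GG)+Q_0-2\alpha$, and then lower-bounds the two pieces asymmetrically --- $\crn_\beta(\GG_1')>\alpha k_1+Q_0$ from the itemized hypothesis (or minimality) and $\crn_\beta(\GG_2')>\alpha k_2+Q$ from the minimality of $n$, using $n\ge 2n_1+2$ only to guarantee $k_2\ge n_1$ --- and the sum contradicts $\crn_\beta(\GG)\le\alpha n+Q$. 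Your argument could be repaired by saying ``if neither choice of deleted arc yields the required bound, sum the two failed inequalities,'' but that is precisely the paper's two-sided count, not the one-sided averaging you propose; the definition of $Q_0$ is calibrated exactly for that summation.

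A secondary problem is your cost estimate for the rerouted edges. To pass through the crossing point $x$ they must run along $e_1$ and along $e_2$, and the weight of crossings on $e_1$ and $e_2$ themselves is not bounded by any function of $T$, $k$, $c$: the bound $c$ applies only to the internal edges of each copy, while $e_1,e_2$ are (in the main case) external and may a priori be crossed many times. The paper handles this with an exchange argument that again uses the minimality of $\crn_\beta(\GG)$: each of the $k$ external edges $e'$ in the cut containing $e_1$ could be redrawn along $e_1$, so its crossing weight differs from that of $e_1$ by at most $(2|E(G)|+2c+4k)(1+\beta)$, and hence rerouting all of them along $e_1$ (and similarly at $e_2$) costs at most $2k(2|E(G)|+2c+4k)(1+\beta)$ plus $4k^2$ for mutual crossings. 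Your parenthetical justification ``confined to copies $i$ and $j$'' omits exactly the weight carried by $e_1$ and $e_2$; you flagged this accounting as the likely obstacle, and indeed both points (a) and (b) of your final paragraph are where the proposal, as written, does not go through.
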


\begin{proof}
Since $n>n_1$, the minimality of $n$ and Lemma~\ref{lemma-elimbad} imply that every copy of $G$ in $\GG$ is intersected at most $c$ times.

Suppose that $\GG$ contains two crossing edges $e_1$ and $e_2$ with cyclic tile distance at least $n_0+2$.
Let $\GG_1$ and $\GG_2$ be the subdrawings of $\GG$ consisting of the tiles on the two paths between $e_1$ and $e_2$ in the cycle forming $\cyc(T^n)$;
in case that $e_1$ or $e_2$ is an internal edge of a tile, this tile is included neither in $\GG_1$ nor in $\GG_2$.
For $i\in\{1,2\}$, $\GG_i$ is a drawing of $T^{k_i}$, where $k_1$ and $k_2$ are integers such that $k_1+k_2=n$ if both $e_1$ and $e_2$ are external,
$k_1+k_2=n-1$ if one of $e_1$ and $e_2$ is internal and $k_1+k_2=n-2$ if both $e_1$ and $e_2$ are internal.  Furthermore, we have $k_1,k_2\ge n_0$.
Since $n\ge 2n_1+2$, we can also assume that $k_2\ge n_1$.

Suppose that $e_1$ is an external edge, and let $S$ be the set of edges between the two copies of $G$ that are joined by $e_1$.
Let $x$ be the sum of weights of crossings of $e_1$ in $\GG$.
Consider an edge $e'\in S$ and let $y$ be the sum of weights of crossings of $e'$ in $\GG$.
Since $T$ is connected, we can redraw $e'$ to follow a path in the tiles with
which it is incident to come close to the ends of $e_1$ and then follow the
drawing of $e_1$ instead of its current drawing. Since every copy of $G$ is
intersected at most $c$ times, the crossings on the redrawn edge $e'$ would
have weight at most $(2|E(G)|+2c+4k)(1+\beta)+x$.
By the minimality of $\crn_\beta(\GG)$,
we have $(2|E(G)|+2c+4k)(1+\beta)+x\ge y$, and by symmetry, $|x-y|\le (2|E(G)|+2c+4k)(1+\beta)$.
Therefore, we can redraw all edges of $S$ along $e_1$ and increase $\crn_\beta(\GG)$ by at most $k(2|E(G)|+2c+4k)(1+\beta)$.
If $e_2$ is external, we perform a similar transformation for $e_2$ as well; this may incur an additional penalty of at most $4k^2$ for the intersections
between the rerouted edges. By allowing this penalty, we can achieve any specified order of the edges in $S$ close to the crossing of $e_1$ with $e_2$.
In case that $e_1$ or $e_2$ are internal, we can perform the same transformation after first eliminating the copies
of $G$ containing $e_1$ or $e_2$ similarly to the proof of Lemma~\ref{lemma-elimbad}.

Finally, we can match the rerouted edges in the vicinity of the crossing of $e_1$ and $e_2$
and obtain drawings $\GG_1'$ and $\GG'_2$ of $\cyc(T^{k_1})$ and $\cyc(T^{k_2})$, respectively, such that
$\crn_\beta(\GG_1')+\crn_\beta(\GG_2')\le \crn_\beta(\GG)+2k(2|E(G)|+2c+4k)(1+\beta)+4k^2=\crn_\beta(\GG)+Q_0-2\alpha$.
Since $n>k_2\ge n_1$, the minimality of $n$ implies that $\crn_\beta(\GG_2')>\alpha k_2+Q$.  Furthermore, note that
$\crn_\beta(\GG_1')>\alpha k_1+Q_0$ (this follows from the assumptions of the lemma if $k_1<n_1$, and
by the minimality of $n$ otherwise).  Therefore, $\crn_\beta(\GG_1')+\crn_\beta(\GG_2')>\alpha(k_1+k_2)+Q+Q_0\ge \alpha n + Q+Q_0-2\alpha$.
Therefore, we have $\crn_\beta(\GG)>\alpha n+Q$, which is a contradiction.
\end{proof}

The main approximation result follows straightforwardly from the next lemma.

\begin{lemma}\label{lemma-lbound}
Let $T=(G,A,B)$ be a connected linked tile of width $k$ and let $\varepsilon\le 1$ be a positive real number.
Let $\alpha=c(T)+\tfrac{1}{2}\varepsilon$, $\beta = \varepsilon/(8\alpha)$, $c=\tfrac{1}{\beta}\bigl(\binom{k}{2}+\alpha\bigr)$, $Q_0=2k(2|E(G)|+2c+4k)(1+\beta)+4k^2+2\alpha$,
$n_0 = \lceil 2Q_0/\varepsilon\rceil$, $Q=8c(n_0+1)(1+\beta)+4k^2(n_0+2)^2+2\binom{k}{2}$ and $n_1=\lceil 2Q/\varepsilon\rceil$.  Then, there exists $n$ such that $n_0\le n\le 2n_1+1$ and
$c_n(T)/n\le c(T)+\varepsilon$.
\end{lemma}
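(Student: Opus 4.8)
The plan is to argue by contradiction: suppose that for every $n$ with $n_0\le n\le 2n_1+1$ we have $c_n(T)/n>c(T)+\varepsilon$. Since $c_n(T)\le \crn_\beta(\GG)$ is false in general but $c_n(T)$ equals the unweighted minimum, I will instead work with the weighted crossing number throughout and compare to $\alpha=c(T)+\tfrac12\varepsilon$. First I would observe, using $c(T)=\lim t_n(T)/n$ from Corollary~\ref{cor-limac} together with subadditivity, that for all sufficiently large $m$ there is a tile drawing of $T^m$ with at most $(c(T)+\tfrac14\varepsilon)m$ crossings; composing such drawings cyclically gives a drawing $\GG$ of $\cyc(T^m)$ with $\crn_\beta(\GG)\le (1+2\beta)(c(T)+\tfrac14\varepsilon)m$. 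Since $\beta=\varepsilon/(8\alpha)$ is chosen precisely so that $(1+2\beta)(c(T)+\tfrac14\varepsilon)\le \alpha$ for $\varepsilon\le 1$ (here one checks $2\beta\alpha=\varepsilon/4$ and $(c(T)+\tfrac14\varepsilon)+\tfrac14\varepsilon\le \alpha$), there exist arbitrarily large $m$ admitting a drawing with $\crn_\beta\le \alpha m$; in particular with $\crn_\beta\le\alpha m+Q$.

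Next I would let $n_1'\ge n_1$ be the smallest integer for which some $\cyc(T^{n})$ with $n\ge n_1'$... — more precisely, apply Lemma~\ref{lemma-nodist} with the given $\alpha,\beta,c,Q_0,n_0$ and with $n_1$ as specified. The hypotheses of that lemma require that for $n_0\le m<n_1$ there is no drawing $\GG'$ of $\cyc(T^m)$ with $\crn_\beta(\GG')\le\alpha m+Q_0$; this is exactly where the contradiction hypothesis feeds in, since $Q_0\le Q$ and $\alpha m+Q_0\le \alpha m + Q \le$ the threshold, and a drawing below this threshold would (after translating weighted to unweighted crossings, losing at most a factor controlled by $n_0=\lceil 2Q_0/\varepsilon\rceil$) give $c_m(T)/m\le c(T)+\varepsilon$, contrary to assumption. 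Let $n$ be the minimal integer $\ge n_1$ admitting a drawing $\GG$ with $\crn_\beta(\GG)\le\alpha n+Q$, chosen with $\crn_\beta(\GG)$ minimum; by the previous paragraph such $n$ exists. If $n\ge 2n_1+2$, Lemma~\ref{lemma-nodist} tells us every crossing of $\GG$ has cyclic tile distance at most $n_0+1$.

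Then I would derive a contradiction with the minimality of $n$ by cutting $\GG$ into two cyclic pieces. Since all crossings are ``local'' (cyclic tile distance $\le n_0+1$), choose a block of $n_0+2$ consecutive tiles meeting no crossing that straddles its two boundary cuts — such a block exists because $n$ is much larger than $(n_0+2)\cdot(\text{number of crossings})$, using $\crn_\beta(\GG)\le\alpha n+Q$ and $n\ge n_1=\lceil 2Q/\varepsilon\rceil$ to bound the crossing count by roughly $\alpha n$, and a counting/averaging argument to find a clean cut. Cutting along the two external edge-cuts bounding this block, and rerouting the $2k$ external edges through the block using linkedness (as in Lemma~\ref{lemma-elimbad} and the cut argument in Lemma~\ref{lemma-nodist}), splits $\GG$ into a drawing of $\cyc(T^{n-n_0-2})$ plus a small-piece drawing; the rerouting costs at most $8c(n_0+1)(1+\beta)+4k^2(n_0+2)^2+2\binom k2 = Q$ extra weighted crossings, which is why $Q$ was defined that way. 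This produces a drawing of $\cyc(T^{n'})$ with $n_1\le n'<n$ and $\crn_\beta\le\alpha n'+Q$, contradicting minimality of $n$. Hence $n\le 2n_1+1$, and since $n\ge n_1\ge n_0$ and its drawing has $\crn_\beta(\GG)\le\alpha n+Q$, converting back to unweighted crossings and dividing by $n$ (with $n\ge n_1\ge 2Q/\varepsilon$ absorbing the additive $Q$) yields $c_n(T)/n\le c(T)+\varepsilon$.

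The main obstacle I anticipate is the clean-cut counting step: one must ensure there is a window of $n_0+2$ consecutive tiles such that no single crossing of $\GG$ has one endpoint-edge before the window and the other after it (i.e., the window ``covers'' no crossing of cyclic tile distance $\le n_0+1$ that bridges it), and simultaneously that the window's own interior and boundary carry few crossings so the rerouting penalty stays within $Q$. This requires combining the crossing-count bound from $\crn_\beta(\GG)\le\alpha n+Q$ with $n\ge n_1$ so that the ``bad'' windows (those bridged by some crossing, or too heavily crossed) number fewer than $n$; the bookkeeping of which crossings obstruct which windows, and verifying the penalty matches the defined value of $Q$, is the delicate part. The translation between weighted $\crn_\beta$ and the genuine crossing number $c_n(T)$ must also be done carefully at both ends, but the choices of $\beta$, $n_0$, $n_1$ are calibrated exactly to make these conversions go through.
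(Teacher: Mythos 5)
Your setup is fine and matches the paper: the existence of a smallest $n\ge n_1$ admitting a drawing with $\crn_\beta\le\alpha n+Q$, the observation that a drawing of $\cyc(T^m)$ with $\crn_\beta\le\alpha m+Q_0$ for some $n_0\le m<n_1$ (or with $\crn_\beta\le\alpha n+Q$ for $n\le 2n_1+1$) already yields the conclusion because $\crn\le\crn_\beta$ and $m\ge n_0$ (resp.\ $n\ge n_1$) absorbs the additive constant, and the application of Lemma~\ref{lemma-nodist} when $n\ge 2n_1+2$. The gap is in your final splitting step, and it is twofold. First, the ``clean window'' you need may not exist: a drawing with $\crn_\beta(\GG)\le\alpha n+Q$ can have $\Theta(n)$ crossings (about $\alpha$ per tile), and each crossing at cyclic tile distance at most $n_0+1$ from a cut obstructs that cut, so an averaging argument would need the number of crossings to be $o(n/n_0)$, which is false here; generically every one of the $n$ candidate boundary cuts is straddled by some crossing. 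Second, and more fundamentally, even granting a block of $n_0+2$ consecutive tiles met by no straddling crossing, deleting it and closing the remaining $n'=n-n_0-2$ tiles into $\cyc(T^{n'})$ only gives $\crn_\beta\le\crn_\beta(\GG)+(\text{rerouting cost})\le(\alpha n+Q)+Q=\alpha n'+Q+\bigl(\alpha(n_0+2)+Q\bigr)$. The budget $\alpha(n_0+2)$ you free by shrinking the cycle is not matched by removed crossings (your clean block may contain no crossings at all), so the bound $\crn_\beta\le\alpha n'+Q$ needed to contradict the minimality of $n$ does not follow; you invoke minimality only once and hence gain only one $+Q$, while paying up to $Q$ for the rerouting.

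The paper's argument is structured precisely to avoid both problems. It cuts the cycle at two arbitrary external edge-cuts into linear pieces $\GG_1$, $\GG_2$ of sizes $k_1+k_2=n$ with $k_1,k_2\ge n_1$ (no tiles are discarded and no clean cut is needed), and closes each piece into a drawing of $\cyc(T^{k_i})$ by routing the $k$ linkedness paths through the other piece. The closure cost is bounded by $\tfrac12 Q$ per piece: by Lemma~\ref{lemma-nodist} the new edges cannot meet edges at cyclic tile distance more than $n_0+1$ from the cut, and near the cut the crossings with internal edges are bounded via the per-tile bound $c$ (Lemma~\ref{lemma-elimbad}) and external--external crossings by $2k^2(n_0+2)^2$ --- this is exactly where the value of $Q$ comes from. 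Then minimality of $n$ is applied to \emph{both} closed pieces, giving $\crn_\beta(\GG'_1)+\crn_\beta(\GG'_2)>\alpha(k_1+k_2)+2Q=\alpha n+2Q$, whereas the construction gives $\crn_\beta(\GG'_1)+\crn_\beta(\GG'_2)\le\crn_\beta(\GG)+Q\le\alpha n+2Q$, a contradiction. The key point you are missing is that the additive constant $Q$ must be gained twice (once per piece, from minimality) while being paid only once (total rerouting), which forces a split into two pieces covering all $n$ tiles rather than the removal of a block.
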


\begin{proof}
Since $\lim_{t\to\infty} \frac{c_t(T)}{t}=c(T)$, for every sufficiently large $t$, there exists a drawing $\GG$ of
$\cyc(T^t)$ with $\crn(\GG)\le (c(T)+\varepsilon/(4(1+2\beta)))t$.  Note that $\crn_\beta(\GG)\le (1+2\beta)\crn(\GG)$,
and thus $\crn_\beta(\GG)\le (c(T)+2\beta c(T)+\tfrac{1}{4}\varepsilon)t \le \alpha t$ for such an integer $t$.
Hence, there exists the smallest integer $n$ such that $n\ge n_1$ and for some drawing $\GG$ of $\cyc(T^n)$, we have
$\crn_\beta(\GG)\le \alpha n + Q$.
Let $\GG$ be such a drawing with the smallest possible $\crn_\beta(\GG)$.

If there exists $m$ such that $n_0\le m<n_1$ and there is a drawing $\GG'$ of $\cyc(T^m)$ with $\crn_\beta(\GG')\le \alpha m + Q_0$,
then the claim of the lemma holds, since
$\crn(\GG')\le \crn_\beta(\GG')\le \alpha m + Q_0=(c(T)+\tfrac{1}{2}\varepsilon+Q_0/m) m\le (c(T)+\tfrac{1}{2}\varepsilon+Q_0/n_0) m \le (c(T)+\varepsilon)m$.
Therefore, assume that there is no such $m$.

Similarly, if $n\le 2n_1+1$, then the claim holds, since
$\crn(\GG)\le\crn_\beta(\GG)\le \alpha n + Q=(c(T)+\tfrac{1}{2}\varepsilon+Q/n) n\le (c(T)+\tfrac{1}{2}\varepsilon+Q/n_1)n \le (c(T)+\varepsilon)n$.
Thus, we may assume that $n\ge 2n_1+2$ and we can apply Lemma \ref{lemma-nodist} to conclude that any two crossing edges in $\GG$ have cyclic tile distance at most $n_0+1$.

Let $k_1$ and $k_2$ be integers such that $k_1+k_2=n$ and $k_1, k_2\ge n_1$.
The drawing $\GG$ can be decomposed to drawings $\GG_1$ and $\GG_2$ of $T^{k_1}$ and $T^{k_2}$.
Consider the drawing $\GG_1$, and let $A_0$ and $B_0$ be the vertices of
the first and the last tile of $\GG_1$, respectively, incident with the external edges of $\GG\setminus \GG_1$.  Let $S$ be the set of edges of
$\GG\setminus \GG_1$ incident with $A_0\cup B_0$.  Let $Z$ be the set of edges of $\GG_1$
that are at cyclic tile distance at least $n_0+2$ from the edges of $S$ and let $R$ be the edges of $\GG_1$
at cyclic distance at most $n_0+1$ from $S$.  Since $T$ is linked, there exist pairwise edge-disjoint paths $P_1$, \ldots, $P_k$ in
$\GG_2\cup S$ joining the corresponding vertices of $A_0$ and $B_0$.  For $i=1,\ldots, k$, let us add an edge to $\GG_1$ between
the $i$-th vertices of $A_0$ and $B_0$ drawn along $P_i$;
let $\GG'_1$ be the resulting drawing and let $S'$ be the set of newly added edges.

By Lemma~\ref{lemma-nodist}, the edges of $S'$ do not intersect $Z$.
Let us estimate the weight of crossings between $S'$ and $R$. Note that
each such crossing corresponds to a crossing in $\GG$ between an edge of $R$ and of $\GG_2$.
The internal edges of $R$ belong to at most $2(n_0+1)$ distinct tiles of $\GG$, and thus
by Lemma~\ref{lemma-elimbad}, they are intersected at most $2c(n_0+1)$ times.
Consider the external edges of $R$.  By Lemma~\ref{lemma-nodist}, they
can only intersect the edges of $\GG_2$ at cyclic tile distance at most $n_0+1$ from $S$.
By Lemma~\ref{lemma-elimbad}, it follows that there are at most
$2c(n_0+1)$ intersections between external edges of $R$ and internal edges of $\GG_2$.
Finally, note that each two external edges of $\GG$ intersect at most once, as otherwise
we could decrease $\crn_\beta(\GG)$.  Therefore, the number of crossings between
external edges of $R$ and $\GG_2$ is bounded by $2k^2(n_0+2)^2$.

As usual, we can assume that each two edges of $S'$ intersect at most once in $\GG'_1$
by redrawing them if necessary.  We conclude that
$\crn_\beta(\GG'_1)\le\crn_\beta(\GG_1)+4c(n_0+1)(1+\beta)+2k^2(n_0+2)^2+\binom{k}{2}=\crn_\beta(\GG_1)+ \tfrac{1}{2}\,Q$.
Similarly, we can obtain a drawing $\GG'_2$ of $\cyc(T^{k_2})$ with $\crn_\beta(\GG'_2)\le \crn_\beta(\GG_2)+ \tfrac{1}{2}\,Q$.
We have $\crn_\beta(\GG_1)+\crn_\beta(\GG_2)\le \crn_\beta(\GG)$, and thus
$\crn_\beta(\GG'_1)+\crn_\beta(\GG'_2)\le \crn_\beta(\GG)+Q$.

However, since $n_1\le k_1,k_2<n$, the minimality of $n$ implies that
$\crn_\beta(\GG'_1)>\alpha k_1 + Q$ and
$\crn_\beta(\GG'_2)> \alpha k_2 + Q$.
Therefore, $\crn_\beta(\GG)>\alpha n+Q$, which is a contradiction.
\end{proof}

We are ready to give the proof of the main theorem.

\begin{proof}[Proof of Theorem \ref{thm:main}]
By Lemma~\ref{lemma-limit}, we can assume that the tile $T=(G,A,B)$ is connected and linked
(otherwise, we consider each connected subtile of $T^m$ separately).
Let $k$ be the width of $T$.  Let $\varepsilon_1$ be a constant to be chosen later and let us consider the following quantities:
\begin{itemize}
\item $\alpha_d=\tfrac{1}{2}\varepsilon_1$ and $\alpha_u=M(T)+\tfrac{1}{2}\varepsilon_1$
\item $\beta_d = \varepsilon_1/(8\alpha_u)$ and $\beta_u = \varepsilon_1/(8\alpha_d)$
\item $c_d=\tfrac{1}{\beta_u}\bigl(\binom{k}{2}+\alpha_d\bigr)$ and $c_u=\tfrac{1}{\beta_d}\bigl(\binom{k}{2}+\alpha_u\bigr)$
\item $Q_{0,d}=2k(2|E(G)|+2c_d+4k)(1+\beta_d)+4k^2+2\alpha_d$ and\\ $Q_{0,u}=2k(2|E(G)|+2c_u+4k)(1+\beta_u)+4k^2+2\alpha_u$
\item $n_{0,d} = \lceil 2Q_{0,d}/\varepsilon_1\rceil$ and $n_{0,u} = \lceil 2Q_{0,u}/\varepsilon_1\rceil$
\item $Q_u=8c_u(n_{0,u}+1)(1+\beta_u)+4k^2(n_{0,u}+2)^2+2\binom{k}{2}$ and
\item $n_{1,u}=\lceil 2Q_u/\varepsilon_1\rceil$.
\end{itemize}
These numbers are chosen in such a way that they are related to the values used in Lemma \ref{lemma-lbound}. More precisely, $\alpha_d,\beta_d,c_d,\dots$ give lower bounds and
$\alpha_u,\beta_u,c_u,\dots$ give upper bounds on
the corresponding quantities in Lemma \ref{lemma-lbound}, used with $\varepsilon_1$ in the role of $\varepsilon$.
In particular, if $n_0$ and $n_1$ are the constants of Lemma~\ref{lemma-lbound} for $T$ and $\varepsilon_1$, then
$n_0\ge n_{0,d}$ and $n_1\le n_{1,u}$.  Furthermore, $n_{0,d}=\Theta(1/\varepsilon_1)$ and $n_{1,u}=\Theta(1/\varepsilon_1^5)$ are computable, given $T$. (Here and below, all constants involved in the $\Theta$-notation depend only on $T$.)

Let $n_2=\Theta(1/\varepsilon)$ and $a_0=\Theta(1/\varepsilon)$ be the constants of Lemma~\ref{lemma-upper} applied for $\varepsilon/2$.
Let us now choose $\varepsilon_1 \le \varepsilon/2$ in such a way that $n_{0,d}\ge n_2$ and $\varepsilon_1 = \Theta(\varepsilon)$.
Let $N=a_0n_{1,u}$ and note that $N=\Theta(1/\varepsilon^6)$.

By Lemma~\ref{lemma-lbound}, there exists $n$ such that $n_{0,d}\le n\le n_{1,u}$ and $c_n(T)/n\le c(T)+\varepsilon_1\le c(T)+\varepsilon/2$.
By Lemma~\ref{lemma-upper}, we have $c_t(T)/t\le c_n(T)/n+\varepsilon/2\le c(T)+\varepsilon$ for every $t\ge N$.
Conversely, Corollary~\ref{cor-upper} implies that $c_t(T)/t\ge c(T)-\varepsilon$.
\end{proof}

\subsection*{Acknowledgement}
The authors are grateful to BIRS for providing research environment during a workshop on crossing numbers \cite{BIRSreport} where the results of this paper have emerged. The authors also acknowledge helpful discussions with several workshop participants within a work group where they presented their initial ideas.

\bibliographystyle{plain}
\bibliography{tiles}

\begin{thebibliography}{1}

\bibitem{BIRSreport}
Dan Archdeacon, Gelasio Salazar, and L\'azsl\'o Sz\'ekely, 2011.
\newblock Crossing numbers turn useful, Report on BIRS Workshop (11w5144).

\bibitem{crossalg}
Markus Chimani, Petra Mutzel, and Immanuel~M. Bomze.
\newblock A new approach to exact crossing minimization.
\newblock In Dan Halperin and Kurt Mehlhorn, editors, {\em ESA}, volume 5193 of
  {\em Lecture Notes in Computer Science}, pages 284--296. Springer, 2008.

\bibitem{Pinon06}
Benny Pinontoan.
\newblock Tile with rational average crossing number.
\newblock {\em J. Indones. Math. Soc.}, 12(1):83--87, 2006.

\bibitem{PR2}
Benny Pinontoan and R.~Bruce Richter.
\newblock Crossing numbers of sequences of graphs. {II}. {P}lanar tiles.
\newblock {\em J. Graph Theory}, 42(4):332--341, 2003.

\bibitem{PR1}
Benny Pinontoan and R.~Bruce Richter.
\newblock Crossing numbers of sequences of graphs. {I}. {G}eneral tiles.
\newblock {\em Australas. J. Combin.}, 30:197--206, 2004.

\end{thebibliography}

\end{document}